\documentclass[12pt]{amsart}
%%%%%%%%%%%%%%%%%%%%%%%%%%%%%%%%%%%%%%%%%%%%%%%%%%%%%%%%%%%%%%%%%%%%%%%%%%%%%%%%%%%%%%%%%%%%%%%%%%%%%%%%%%%%%%%%%%%%%%%%%%%%%%%%%%%%%%%%%%%%%%%%%%%%%%%%%%%%%%%%%%%%%%%%%%%%%%%%%%%%%%%%%%%%%%%%%%%%%%%%%%%%%%%%%%%%%%%%%%%%%%%%%%%%%%%%%%%%%%%%%%%%%%%%%%%%
\usepackage{amsfonts}
\usepackage{amsmath}
\usepackage{amssymb}
\usepackage{graphicx}
\usepackage{amscd}
\usepackage{xcolor}
\usepackage{hyperref}

\DeclareMathOperator\arctanh{arctanh}
\setcounter{MaxMatrixCols}{10}
%TCIDATA{OutputFilter=LATEX.DLL}
%TCIDATA{Version=5.50.0.2953}
%TCIDATA{<META NAME="SaveForMode" CONTENT="1">}
%TCIDATA{BibliographyScheme=Manual}
%TCIDATA{Created=Tue Dec 19 10:11:58 2006}
%TCIDATA{LastRevised=Thursday, January 17, 2019 12:38:27}
%TCIDATA{<META NAME="GraphicsSave" CONTENT="32">}
%TCIDATA{<META NAME="DocumentShell" CONTENT="General\Blank Document">}
%TCIDATA{Language=American English}
%TCIDATA{CSTFile=amsart.cst}

\newtheorem{theorem}{Theorem}
\newtheorem{acknowledgement}[theorem]{Acknowledgement}

\newtheorem{definition}[theorem]{Definition}
\newtheorem{example}[theorem]{Example}

\newtheorem{lemma}[theorem]{Lemma}

\newtheorem{proposition}[theorem]{Proposition}

\newcommand{\sn}{\mathrm{sn}}
\newcommand{\cn}{\mathrm{cn}}
\newcommand{\dn}{\mathrm{dn}}

\begin{document}
	\subjclass[2010]{35A30, 58J70, 58J72}
	\keywords{harmonic maps, Beltrami equations, sinh-Gordon equation, sine-Gordon equation, hyperbolic plane, B{\"a}cklund transformation.}

	\author{G. Polychrou, E. Papageorgiou, A. Fotiadis, C. Daskaloyannis }
	
	\title[New examples of harmonic maps to the hyperbolic plane]{New examples of harmonic maps to the hyperbolic plane via B{\"a}cklund transformation}
	
	\maketitle

	%the classification is valid for 2010

	\begin{abstract}
		We study harmonic maps from a subset of the complex plane to a subset of the hyperbolic plane. In \cite{FotDask}, harmonic maps are related to the sinh-Gordon equation and a B{\"a}cklund transformation is introduced, which connects solutions of the sinh-Gordon and sine-Gordon equation. We develop  this machinery in order to construct new harmonic maps to the hyperbolic plane. 
	\end{abstract}
	
	\maketitle
	
	\section{Introduction and Statement of the Results}\label{Introduction}

	This article has been motivated by the following open problem: how can we construct a harmonic map explicitly?
	
	The construction of harmonic maps from a Riemann surface to the hyperbolic plane is a classical problem. Many beautiful results have been achieved in this direction, for example, Wolf's parametrization of Teichm{\"u}ller space via harmonic maps \cite{Wolf1}, the study of Au-Tam-Wan \cite{ATW} and others of harmonic maps to ideal polygons with polynomial Hopf differential, the construction of a harmonic diffeomorphism from the complex plane which solved a conjecture of Schoen \cite{CR}, and so on.
	
	In this paper, our aim is to construct harmonic maps between Riemann surfaces when the curvature in the target is a negative constant, say $-1$. We should emphasize that our construction is local, thus we consider an open simply connected set that contains the origin and we consider the target to be a subset of the hyperbolic plane $\mathbb{H}^2$. For that, we develop some of the machinery introduced in \cite{FotDask}, which connects the harmonic map problem with elliptic versions of the sinh-Gordon and sine-Gordon equation.
	
	Let $w=w(x,y)$ be a solution of the sinh-Gordon equation \begin{equation}\label{sinh-Gordon eqn}
		\Delta w=2 \sinh(2w)
	\end{equation} 
	where $\Delta=\partial_{xx}^2+\partial_{yy}^2$ is the Laplacian with the flat metric and let  $u=u(z, \bar{z})$ be a solution of the Beltrami equation 
	\begin{equation}\label{Beltrami eqn}
		\frac{\partial_{\bar{z}}u}{\partial_{z}u}=e^{-2w},
	\end{equation} 
	and $z=x+iy$ lie in an open simply connected subset $\Omega$ of $\mathbb{C}$ where the map $u$ is a well defined $C^{2}$ map. Without loss of generality we assume that $\Omega$ contains the origin. Then, $u$ is a harmonic map, if the curvature of the target is $-1$ \cite[Theorem 1, Corollary 2]{FotDask}.
		\begin{definition}
			For $w$ and $u$ as above, we say that $u$ is the harmonic map that corresponds to the solution $w$ of the sinh-Gordon (\ref{sinh-Gordon eqn}).
	\end{definition}
	From now on, consider the target  surface fixed and realized as the hyperbolic half plane $\mathbb{H}^2$. Using a specific coordinate system  on the domain (see Section \ref{sec:Preliminaries}  for more details),  the harmonic map equation under study becomes
	\begin{equation}\label{eq:HarmHyper}
		\frac{\partial_z u \partial_z\bar{u}}{S^2}=1.
	\end{equation}
This is possible because the Hopf differential is assumed to be non vanishing.

	In \cite{FotDask}, it is proved that the system
	\begin{align}
		\partial_x w-\partial_y \theta&=-2\sinh w \sin \theta \label{Back1intro}\\
		\partial_y w+\partial_x\theta&=-2\cosh w \cos \theta\label{Back2intro},
	\end{align}	
	is a \textit{B{\"a}cklund transformation} that connects a solution $w=w(x,y)$ of the sinh-Gordon equation (\ref{sinh-Gordon eqn}) and a solution $\theta=\theta(x,y)$ of the sine-Gordon equation 
	\begin{equation}	\label{sin-Gordon eqn}
		\Delta \theta=-2 \sin(2\theta).
	\end{equation}
This motivates us to define the following class of functions.
	\begin{definition}
			We say that the pair of functions $(w,\theta)$ is in the class $(BT)$,  if the functions $w$ and $\theta$ satisfy (\ref{Back1intro})-(\ref{Back2intro}). Consequently, $w$ is a solution of the sinh-Gordon equation (\ref{sinh-Gordon eqn}) and $\theta$ is a solution of the sine-Gordon equation (\ref{sin-Gordon eqn}).
		\end{definition}

	From now on we assume that $(w,\theta)\in (BT)$. 
	Set 
	\begin{align*}I_1&=I_1(x)=\int_0^x\cosh w(t,0)\sin\theta(t,0)dt, \\
		I_2&=I_2(x,y)=\int_0^y\sinh w(x,s)\cos\theta(x,s)ds\\
		I_3&=I_3(x)=\int_0^x e^{2 I_1 (t)}\cosh w(t,0)\cos\theta(t,0)dt,\\
		I_4&=I_4(x,y)=e^{2 I_1 (x)}\int_0^y e^{2 I_2 (x,s)}\sinh w(x,s)\sin\theta(x,s)ds.
	\end{align*}
	We first prove the following result.
	\begin{proposition}\label{Theorem2} Define the function $S$ by 
		\begin{align*}
			S(x,y)=S(0,0)e^{2(I_1+I_2)}, 
		\end{align*} 
		and the function $R$ by
		\begin{align*}
			R(x,y)=R(0,0)+2S(0,0)(I_3-I_4). 
		\end{align*}
		Then, 
		\[
		u(x,y)=R(x,y)+iS(x,y)
		\]
		is the harmonic map that corresponds to $w$. 
			The domain of $R$ and $S$ is the largest possible open simply connected subset of $\mathbb{C}$ containing the origin so that the above expressions make sense.
	\end{proposition}
	\textbf{Remark.} \textit{Observe that the function $S$, by its definition preserves the sign of the initial data $S(0,0)$. Also, the image of $u$ is a subset of $\mathbb{H}^2$. However, these maps are not necessarily injective on their domain, as can be easily seen by Example \ref{ex FD}.}
	
	Therefore, given a pair of functions $(w,\theta)$ that satisfies the B{\"a}cklund transformation, there is an implicit formula for a harmonic map to the hyperbolic plane that involves the integrals $I_1, I_2, I_3, I_4$.  We provide an example of a harmonic map, first studied in \cite[Section 7]{FotDask}, that we recover by the algorithm of Proposition \ref{Theorem2}.

	We next characterize the solutions $w$ of the sinh-Gordon equation that are of the form
	\begin{equation}\label{separable}
		w(x,y)=2\arctanh({F(x)}{G(y)}),
	\end{equation}
	for some  non-constant functions $F$ and $G$. These functions, in general, turn out to be elliptic (only in certain cases they reduce to elementary functions).  We also prove that the initial value problem $\Delta w=2\sinh(2w)$, $\partial_{y}w(x,0)=0$, admits a solution of the form   (\ref{separable}), see Proposition \ref{Kenmotsu} for a class of such solutions. In this case,  we determine $\theta$ by considering a subclass of $(BT)$.
	\begin{definition}
			We say that the pair of functions $(w,\theta)$ is in the class $(BT_0)$,  when the pair is in $(BT)$ and  $\partial_{y}w(x,0)=0$,  $\theta(0,0)=\frac{\pi}{2}$.
		\end{definition}
	
	For this selection of $(w, \theta)$, and $w$ a solution of the form (\ref{separable}), we apply Proposition \ref{Theorem2} in order to prove our first main result,  which provides an explicit formula for a harmonic map that corresponds to $w$. This is a special case of the more general problem of finding all solutions of the sinh-Gordon equation, and the corresponding harmonic maps. Thus, it is a step (although small) towards the solution of the harmonic map problem. 
	
	Next, we state our first main result. 
	
	\begin{theorem}\label{new class} Assume that {$w(x,y)=2\arctanh({F(x)}{G(y)})$,  $b(x)=\frac{F^{\prime}(x)}{2F(x)}$} and $(w,\theta)\in (BT_0)$. 
		Define the functions $S$ and $R$ on the largest possible open simply connected subset of $\mathbb{C}$ containing the origin so that 
		\begin{equation*}
			S(x,y)=S(0,0) \frac{e^{2X(x) }(\sin{\theta(x,y)}+b(x))}{1+b(x)},
		\end{equation*}
		and 
		\begin{equation*}
			R(x,y)=R(0,0)+S(0,0)\frac{e^{2X(x)}\cos{\theta(x,y)}}{1+b(x)},
		\end{equation*}
		make sense, where $X=X(x)=\int_0^x \cosh w(t,0)dt$.
		Then  $u(x,y)=R(x,y)+iS(x,y)$ is the harmonic map that corresponds to $w$ .
	\end{theorem}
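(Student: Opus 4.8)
The plan is simply to specialize Proposition~\ref{Theorem2} to the present $(w,\theta)$ and to check that the four integrals $I_1,I_2,I_3,I_4$ collapse to the closed forms appearing in the statement; throughout, $x$ is treated as a parameter and $y$ as the active variable. The first step is to pin down $\theta$ on the line $y=0$. Since $(w,\theta)\in(BT_0)$ we have $\partial_y w(x,0)=0$, so restricting the second B\"acklund equation \eqref{Back2intro} to $y=0$ gives the ODE $\partial_x\theta(x,0)=-2\cosh w(x,0)\cos\theta(x,0)$ with initial value $\theta(0,0)=\pi/2$. Because $\cos(\pi/2)=0$, the constant function $\theta(x,0)\equiv\pi/2$ solves this initial value problem, and by uniqueness it is the only solution. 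Hence $\sin\theta(t,0)=1$ and $\cos\theta(t,0)=0$, which immediately yields $I_1(x)=\int_0^x\cosh w(t,0)\,dt=X(x)$ and $I_3(x)=0$.

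The second step is the identity that ties the separable form of $w$ to the B\"acklund system. From $w=2\arctanh(F(x)G(y))$ one computes $\partial_x w=\frac{2F'(x)G(y)}{1-F(x)^2G(y)^2}$ and $\sinh w=\frac{2F(x)G(y)}{1-F(x)^2G(y)^2}$, whence $\partial_x w=2b(x)\sinh w$. Substituting this into the first B\"acklund equation \eqref{Back1intro} gives $\partial_y\theta=\partial_x w+2\sinh w\sin\theta=2\sinh w(\sin\theta+b(x))$. With this formula for $\partial_y\theta$ in hand, I would identify $e^{2I_2}$ by a uniqueness argument: the function $y\mapsto e^{2I_2(x,y)}$ satisfies $f'=2\sinh w(x,y)\cos\theta(x,y)\,f$ with $f(0)=1$ (directly, since $\partial_y I_2=\sinh w\cos\theta$), and the function $y\mapsto\frac{\sin\theta(x,y)+b(x)}{1+b(x)}$ satisfies the same linear ODE with the same initial value $1$ (using the expression just derived for $\partial_y\theta$ and $\theta(x,0)=\pi/2$). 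Therefore $e^{2I_2(x,y)}=\frac{\sin\theta(x,y)+b(x)}{1+b(x)}$, and Proposition~\ref{Theorem2} gives $S(x,y)=S(0,0)e^{2(I_1+I_2)}=S(0,0)\frac{e^{2X(x)}(\sin\theta(x,y)+b(x))}{1+b(x)}$, the asserted formula for $S$.

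For $R$, since $I_3=0$ it remains to evaluate $-2I_4$. Using $e^{2I_2}=\frac{\sin\theta+b}{1+b}$ together with $\partial_y(\cos\theta)=-\sin\theta\,\partial_y\theta=-2\sinh w\sin\theta(\sin\theta+b)$, the integrand of $I_4$ rewrites as $e^{2I_2}\sinh w\sin\theta=\frac{\sinh w\sin\theta(\sin\theta+b)}{1+b}=\frac{-1}{2(1+b(x))}\,\partial_y\cos\theta(x,y)$, so the $y$-integral telescopes to $\frac{-\cos\theta(x,y)}{2(1+b(x))}$, the boundary term at $y=0$ vanishing since $\cos\theta(x,0)=0$. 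Hence $I_4=\frac{-e^{2X(x)}\cos\theta(x,y)}{2(1+b(x))}$ and $R(x,y)=R(0,0)+2S(0,0)(I_3-I_4)=R(0,0)+S(0,0)\frac{e^{2X(x)}\cos\theta(x,y)}{1+b(x)}$, as claimed; Proposition~\ref{Theorem2} then delivers that $u=R+iS$ is a harmonic map corresponding to $w$.

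The only genuinely delicate point is the ODE-uniqueness step identifying $e^{2I_2}$ with $\frac{\sin\theta+b}{1+b}$, which is where the structural hypothesis $w=2\arctanh(FG)$ (through $\partial_x w=2b\sinh w$) and the normalization $\theta(0,0)=\pi/2$ are both used; everything else is bookkeeping. One should also keep in mind the standing non-degeneracy assumptions — $|F(x)G(y)|<1$, $F(x)\neq0$, and $1+b(x)\neq0$ — under which all the expressions above are well defined.
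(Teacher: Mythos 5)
Your proposal is correct and follows essentially the same route as the paper: it specializes Proposition~\ref{Theorem2}, uses $\theta(x,0)\equiv\pi/2$ together with the identity $\partial_y\theta=2\sinh w\,(\sin\theta+b(x))$ coming from the separable form and \eqref{Back1intro}, and evaluates $I_1=X$, $I_3=0$, $e^{2I_2}=\frac{\sin\theta+b}{1+b}$, $I_4=-\frac{e^{2X}\cos\theta}{2(1+b)}$. Your ODE-uniqueness phrasing for $e^{2I_2}$ and the exact-derivative recognition in $I_4$ are just a repackaging of the paper's substitution $\sinh w=\frac{\partial_s\theta}{2(\sin\theta+b)}$, so the content is the same.
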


	Using the machinery of Theorem \ref{new class}, we construct an entirely new harmonic map to the hyperbolic plane. At this point, it is worth mentioning that the initial condition $\theta(0,0)=\frac{\pi}{2}$ was chosen just to facilitate computations; any other initial condition for $\theta$ at the origin would provide a corresponding harmonic map.
	
	In \cite{FotDask}, the approach pursued by the authors on the construction of harmonic maps, is to first consider a one-soliton (i.e., $w=w(x)$) solution of the sinh-Gordon equation and then solve the Beltrami equation (\ref{Beltrami eqn}) to find a harmonic map that corresponds to $w$. However, if $w(x,y)=2\arctanh({F(x)}{G(y)})$, and both $F, G$ are elliptic functions, then this Beltrami equation turns out to be equivalent to a Riccati equation. Therefore, explicit construction of harmonic maps by this approach is rather implausible. Instead, in place of the Beltrami equation, we use the B{\"a}cklund transformation combined with Theorem \ref{new class}, to provide an explicit formula for a harmonic map in terms of the functions $w$ and $\theta$.
	
	Next, instead of considering solutions to the sinh-Gordon equation first, we investigate the case of solutions to the sine-Gordon equation as a starting point. In particular, we prove that if $\theta=\theta(x)$ is a solution to the sine-Gordon equation (\ref{sin-Gordon eqn}), then the function $w$ determined by the B{\"a}cklund transformation is of the form (\ref{separable}).  We then provide an explicit formula for the corresponding harmonic map.	
	
	Our second main result is the following.
	
	{\begin{theorem}\label{thetaxthm}
			Let $\theta=\theta(x)$ be a solution to the sine-Gordon equation and let $w$ be the associated solution to the B{\"a}cklund transformation. Set $a = 2\cos\theta(0) + \theta'(0)$, $b = 2\cos\theta(0) - \theta'(0)$, and $k\in \mathbb{C}$  such that 
			$\tanh \frac{w(0,0)}{2}=- {\sqrt{\frac{a}{b}}\tan(k)}$. Then, the harmonic map $u=R+iS$ that corresponds to $w$ is given by
			\begin{align*} 
				R(x,y)&= R(0,0)+S(0,0)\cosh^2\frac{w(0,0)}{2} J\left(\frac{2\cos\theta(x) - \theta'(x)}{b}\right)\\
				&+ 2S(0,0)\frac{\sin\theta(x)(\cos(\sqrt{ab}y + 2k) - \cos(2k))}
				{2\cos\theta(0)\cos(2k) - \theta'(0)}, \\
				S(x,y)&=S(0,0)\frac{2\cos\theta(x)\cos(\sqrt{ab}y + 2k) - \theta'(x)}
				{2\cos\theta(0)\cos(2k) - \theta'(x)}, 
			\end{align*}
			where 
			\[ J(t)=\int_{1}^{t}
			\frac{u^{2} + \tanh^2\frac{w(0,0)}{2}}{u^{2}}
			\frac{bu^{2} + a}
			{\sqrt{2(8-ab)u^{2} - a^{2} - b^{2}u^{4}}}du.
			\]
			The domain of $R$ and $S$ is the largest possible open simply connected subset of $\mathbb{C}$ containing the origin so that the above expressions make sense.
		\end{theorem}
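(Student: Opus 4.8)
The plan is to reduce Theorem \ref{thetaxthm} to Theorem \ref{new class} by showing that when $\theta = \theta(x)$ solves the sine-Gordon equation, the associated $w$ from the B\"acklund transformation is necessarily of the separable form \eqref{separable}, and then to carry out the integrals explicitly. First I would feed $\theta = \theta(x)$ (so $\partial_y\theta = 0$) into \eqref{Back1intro}--\eqref{Back2intro}: the first equation becomes $\partial_x w = -2\sinh w \sin\theta(x)$, an ODE in $x$ at each fixed $y$, and the second becomes $\partial_y w = -2\cosh w\cos\theta(x) - \theta'(x)$, an ODE in $y$ at each fixed $x$. Integrating the $x$-equation by separation of variables gives $\tanh\frac{w}{2}$ as an explicit function of $\int \sin\theta(x)\,dx$ times a $y$-dependent constant of integration; integrating the $y$-equation (note its right-hand side is independent of $y$ since $\theta=\theta(x)$, so it is linear in $y$ after the $\cosh w$ is handled) pins down that constant. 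Matching the two representations, and using the sine-Gordon equation $\theta'' = -2\sin(2\theta) = -4\sin\theta\cos\theta$ together with the first integral $\tfrac12(\theta')^2 = 2\cos^2\theta + C$ of the sine-Gordon ODE (which produces the combinations $a = 2\cos\theta(0)+\theta'(0)$ and $b = 2\cos\theta(0)-\theta'(0)$, since $ab = (2\cos\theta(0))^2 - \theta'(0)^2$ is, up to sign, that conserved quantity evaluated via $2\cos\theta(x) \pm \theta'(x)$), one extracts $F(x)$ and $G(y)$ in closed form. In particular $G(y)$ will be trigonometric in $\sqrt{ab}\,y$, which is the source of the $\cos(\sqrt{ab}y + 2k)$ terms, and the constant $k$ enters exactly through the initial condition $\tanh\frac{w(0,0)}{2} = -\sqrt{a/b}\tan k$.

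Next I would identify $b(x) = \frac{F'(x)}{2F(x)}$ of Theorem \ref{new class} with the explicit quantity $\frac{2\cos\theta(x)-\theta'(x)}{2\cos\theta(0)-\theta'(0)}$ (up to the normalization forced by \eqref{separable}), and compute $X(x) = \int_0^x \cosh w(t,0)\,dt$. Here $\cosh w(t,0)$ can be written, using $w(x,0) = 2\arctanh(F(x)G(0))$ and the formula for $F$, as a rational expression in $2\cos\theta(t)\pm\theta'(t)$; substituting $u = $ (a suitable ratio of these) converts $e^{2X}$ and the $R$-integral $I_3 - I_4$ into the single quadrature $J(t)$ written in the statement, whose integrand $\frac{u^2+\tanh^2\frac{w(0,0)}{2}}{u^2}\cdot\frac{bu^2+a}{\sqrt{2(8-ab)u^2 - a^2 - b^2u^4}}$ reflects exactly the Weierstrass-type substitution: the denominator $2(8-ab)u^2 - a^2 - b^2u^4$ is the discriminant coming from the first integral of the sine-Gordon ODE expressed in the variable $u$. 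The $S$-formula then follows by plugging $b(x)$ and $\theta(x,y) = \theta(x)$ (careful: in $(BT_0)$, $\theta$ depends on $x$ only, so $\sin\theta(x,y) = \sin\theta(x)$, and the $y$-dependence of $S$ in Theorem \ref{thetaxthm} is what comes out of $e^{2X}$ combined with the $G(y)$ factor — I would need to double-check the precise bookkeeping between "the $\theta$ of $(BT_0)$" and "the $\theta(x)$ we started with," since $\theta(0,0)=\pi/2$ is required in $(BT_0)$ whereas here $\theta(0)$ is arbitrary, so a shift/reflection of coordinates or a direct re-derivation via Proposition \ref{Theorem2} may be cleaner than literally invoking Theorem \ref{new class}).

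Concretely, I expect the cleanest route is: (i) solve the B\"acklund system to get $w$ in separated form with all constants explicit; (ii) verify directly that $S(0,0)e^{2(I_1+I_2)}$ and $R(0,0)+2S(0,0)(I_3-I_4)$ from Proposition \ref{Theorem2} evaluate to the claimed $S$ and $R$ — this sidesteps the $\theta(0,0)=\pi/2$ normalization issue entirely, since Proposition \ref{Theorem2} only needs $(w,\theta)\in(BT)$; (iii) reduce $I_3 - I_4$ to $J(t)$ by the change of variables indicated above and by using $\partial_x I_2 = -\partial_y(\text{something})$ type identities that let the double integral $I_4$ collapse. The main obstacle will be step (iii): showing that the two-variable integral $I_4(x,y) = e^{2I_1}\int_0^y e^{2I_2}\sinh w \sin\theta\,ds$ combines with $I_3$ to give a one-variable quadrature plus the explicit trigonometric-in-$y$ remainder. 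This works because, with $\theta$ depending on $x$ alone, the $s$-integrand in $I_2$ and $I_4$ has controlled $s$-dependence (coming only through $w(x,s)$, which solves the linear-in-$y$ ODE from \eqref{Back2intro}), so the inner integral is elementary; but tracking the constants so that the final answer matches the stated $R$, $S$ — in particular producing exactly $2\cos\theta(0)\cos(2k) - \theta'(0)$ in the denominators and the argument $\sqrt{ab}\,y + 2k$ — is where the bulk of the careful computation lies. A secondary check is that the integrand's radical $\sqrt{2(8-ab)u^2 - a^2 - b^2u^4}$ is the correct one: I would confirm it factors as $\sqrt{(bu^2 - (\text{something}))(\ldots)}$ consistent with the range of $u = \frac{2\cos\theta(x)-\theta'(x)}{b}$ swept out as $x$ varies, using the sine-Gordon first integral to control that range.
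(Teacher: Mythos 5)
Your final plan --- derive the separable form of $w$ from the B\"acklund system with $G(y)$ trigonometric in $\sqrt{ab}\,y$ and $k$ fixed by $\tanh\frac{w(0,0)}{2}=-\sqrt{a/b}\tan k$, then evaluate $I_1,\dots,I_4$ of Proposition \ref{Theorem2} directly rather than via Theorem \ref{new class}, reducing $I_3$ to $J$ by the substitution $u=e^{2I(x)}=1/F(x)$ and collapsing $I_4$ because its inner integrand is an exact derivative of $e^{2I_2}$ --- is essentially the paper's own proof. The only slip, confined to the branch you discard, is identifying $F'/(2F)$ with $\frac{2\cos\theta(x)-\theta'(x)}{2\cos\theta(0)-\theta'(0)}$ (that ratio is $1/F(x)$, while $F'/(2F)=-\sin\theta(x)$); your decision to bypass Theorem \ref{new class}, whose hypotheses $\partial_y w(x,0)=0$ and $\theta(0,0)=\pi/2$ do not hold here, in favor of Proposition \ref{Theorem2} is exactly what the paper does.
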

		
		As an application of Theorem \ref{thetaxthm}, we construct an entirely new example of a harmonic map from an open simply connected set to a subset of the hyperbolic plane, when the Hopf differential is non vanishing.
		
		Apart from the constructions \cite{ATW, CR, Wolf1} already mentioned, there are  many examples of harmonic diffeomorphisms, see for example \cite{C-T, Li-Ta2,S-T-W, Wolf1, Wolf2, X-A}. For instance, a minimal surface in $\mathbb{H}^2\times \mathbb{R}$ projects to a harmonic map to $\mathbb{H}^2$. The study of harmonic diffeomorphisms between Riemannian surfaces is central to the theory of harmonic maps (see for example \cite{S-Y, Hitchin} and observe that a twisted harmonic map, is a harmonic map in a local sense and that each harmonic map induces a solution of Hitchin self-duality equations). The most interesting case is when the surfaces are of constant curvature (see for example \cite{A-R,Kal,Li-Ta2,M2,Minsky, P-H,S-T-W,  Wolf1, Wolf2, Wolf3} and the references therein). In \cite{Han, Minsky, Wolf1, Wolf2, Wolf3}, the geometry of harmonic maps between hyperbolic surfaces is studied, while in \cite{M} the author proves a conjecture of R. Schoen on  harmonic diffeomorphisms between hyperbolic spaces.   For further results about the harmonic map equations see also  \cite{Han, H-T-T-W, Minsky, S-Y, Wolf1, Wolf2, Wolf3}. There is a similar analysis of the Wang equation, which is applied in the study of affine spheres in \cite{DW, Loftin, OT21} and \cite{Li19} explains the relation between the harmonic map and Wang equations with CMC surfaces and hyperbolic affine spheres in $\mathbb{R}^3$, while for connections with the anti-de Sitter geometry, see for instance \cite{Tam19} and \cite{BS}.
		
		Harmonic maps to surfaces of constant curvature are closely related to the elliptic sinh-Gordon equation. The sinh-Gordon  and sine-Gordon equations have many applications and they have both been the subject of extensive study, \cite{F-P,H}. Furthermore, the sinh-Gordon equation was  crucial to the breakthrough work \cite{Wente} on the Wente torus. There is a  close relation to the theory of constant mean curvature surfaces as well, \cite{Joa,K}.  For some striking relations of the sinh-Gordon equation with the CMC surfaces in Minkowski geometry we refer the reader to the papers \cite{BSS, BSS2}.
		
		The outline of this paper is as follows. In Section \ref{sec:Preliminaries} we  recall some basic notations and review certain known results. In Section \ref{sec:Harmonic maps to the upper half-plane model} we prove Proposition \ref{Theorem2} and in Section \ref{sec:The sinh-Gordon equation} we construct new solutions of the sinh-Gordon equation that are of the form $w(x,y)=2\arctanh({F(x)}{G(y)})$. In  Section \ref{sec:A new family of harmonic maps},  for this class of $w$, we find the explicit formula of a harmonic map that corresponds to $(w,\theta)\in (BT_0)$. Finally, in Section \ref{app}, we discuss one-soliton solutions of the sine-Gordon equation and the construction of the corresponding harmonic map.

		\section{Preliminaries}\label{sec:Preliminaries}
		
		In this section we discuss some necessary preliminaries.
		
		Let $u : M \rightarrow N$ be a map between
		Riemann surfaces $(M, g)$, $(N, h)$. The map $u$ is locally represented by
		$u = u(z, \bar{z}) = R + iS$, where $z=x+iy$. From now on, we use the standard notation:
		\[
		\partial_z=\frac{1}{2}(\partial_x-i\partial_y), \quad \partial_{\bar{z}}=\frac{1}{2}(\partial_x+i\partial_y).
		\]
		
		Recall that isothermal coordinates on a Riemannian manifold are local coordinates where the metric is conformal to the Euclidean metric. The existence of  isothermal coordinates on an arbitrary surface with a real analytic metric is a well-known fact, first proved by Gauss (see for instance \cite[Section 8, p.396]{J}). 
		Consider an isothermal coordinate system $(x, y)$ on $M$ such that
		\[g =  e^{f(z,\bar{z})}|dz|^2,\]
		where $z = x + iy$, and an isothermal coordinate system $(R, S)$ on
		$N$ such that
		\begin{equation*}\label{targetconf}h =  e^{F(u,\bar{u})}|du|^2,
		\end{equation*}
		where $u = R + iS$.
		The Gauss curvature on the target is given by the formula
		\[
		K_N(u, \bar{u})= -\frac{1}{2}\Delta F(u, \bar{u}) e^{-F(u, \bar{u})}.
		\]
		
		In isothermal coordinates,  a map between two surfaces is harmonic if and only if it satisfies
		\begin{equation}\label{harm isoth}
			\partial_{z\bar{z}}u + \partial_{u}F(u,\bar{u}) \partial_{z}u\partial_{\bar{z}}u=0,
		\end{equation}
		see \cite[Section 8, p.397]{J}. Note that this equation only depends on the conformal structure of $N$.
		
		The Hopf differential of $u$ is given by
		\begin{equation*}\label{eq:Hopf_theorem}
			\Lambda(z) dz^2=\left( e^{F(u,\bar{u})}\partial_{z} u \partial_{z} \bar{u}\right)dz^2.
		\end{equation*}
		It is well-known that if $u$ is holomoprhic or antiholomorphic then the zeros of $\partial_z u$ and $\partial_{\bar{z}} u$ are isolated of finite order, \cite[Proposition 2.1]{S-Y}. Due to the local nature of our study, \textit{we may assume from now on that $\Lambda$ does not vanish locally}.
		
We will make use of the following proposition.
		
			\begin{proposition}\label{prop:Hopf}
				A necessary and sufficient condition for a $C^2$ map $u$  with non vanishing Hopf differential, and an almost everywhere non vanishing Jacobian, to be a harmonic map, is that
				\begin{equation}\label{eq:Hopf}
					e^{F(u,\bar{u})} \partial_z u    \partial_z\bar{u}= e^{-\mu (z)},
				\end{equation}
				where $\mu (z)$ is a holomorphic function. 
		\end{proposition}
		\begin{proof}
			We sketch the proof for the reader's convenience.
				Assume first that $u$ is a harmonic map. Then the harmonic map equation (\ref{harm isoth}) is equivalent to 
				\[
				\partial_{\bar{z}}(e^{F(u,\bar{u})}  \partial_z u \,\partial_z \bar{u})= 0.
				\]
				Then, the claim  follows under the assumption that $u$ has non vanishing Hopf differential.

			For the converse direction, let  $e^{-\mu(z)}$ be holomorphic. Then, observe that  
				\[
				\text{if} \quad e^{F(u,\bar{u})}  \partial_z u \, \partial_z{\bar{u}}= e^{-\mu(z)} \quad 
				\mbox{then} \quad  
				e^{F(u,\bar{u})} \partial_{\bar{z}}u\, \partial_{\bar{z}}{\bar{u}}=e^{-\overline{\mu(z)}}.
				\]
				Thus, differentiating these equations in terms of $\bar{z},z$ respectively, we find 
				\[ \partial_z \bar{u} \;(\partial^2_{z\bar{z}}u + \partial_uF\, \partial_zu \, \partial_{\bar{z}}u) +
				\partial_{z}u \;(\partial^2_{z\bar{z}}\bar{u} +  \partial_{\bar{u}}F \,\partial_{z}\bar{u}\, \partial_{\bar{z}}\bar{u})=0,\]
				and
				\[ \partial_{\bar{z}} \bar{u} \;(\partial^2_{z\bar{z}}u + \partial_uF\, \partial_zu \, \partial_{\bar{z}}u) +
				\partial_{\bar{z}}u \;(\partial^2_{z\bar{z}}\bar{u} +  \partial_{\bar{u}}F \,\partial_{z}\bar{u}\, \partial_{\bar{z}}\bar{u})=0.\]

			Taking into account that the Jacobian 
				\[\|u_z\|^2-\|u_{\bar{z}}\|^2 =\partial_{x}\,R\partial_{y}S-\partial_{x}\,S\partial_{y}R\] is non vanishing almost everywhere, we deduce that the harmonic map equation hold true almost everywhere. Then, under the $C^2$ assumption on $u$ we deduce that $u$ is harmonic everywhere. 
			
		\end{proof}
		
		A fundamental result in the theory of harmonic maps, which connects the harmonic map problem with the sinh-Gordon equation, is the following. \begin{proposition}[\cite{Minsky}, \cite{Wolf1}] \label{PropMisnkyWolf}
			Let $u:M\rightarrow N$ be a harmonic map. Then, it satisfies the Beltrami
			equation
			\begin{equation*}\label{Beltrami eqnOLD}
				\frac{\partial_{\bar{z}}u}{\partial_{z}u}=e^{-2w+i\phi},
			\end{equation*}
			and $\phi$ is a harmonic function, i.e. $\partial^2_{z\bar{z}}\phi=0$. Furthermore, if $\psi$ is the conjugate harmonic function to $\phi$, then
			\begin{equation*}
				K_N=-\frac{2 \partial^2_{z\bar{z}}w  }{\sinh 2w}e^{\psi},
			\end{equation*}
			where $K_N$ is the curvature of the target manifold $N$.
		\end{proposition}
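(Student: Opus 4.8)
The plan is to read the Beltrami equation as the definition of $w$ and $\phi$---the logarithmic modulus and the argument of the ratio $\partial_{\bar z}u/\partial_z u$---and then to deduce both assertions (harmonicity of $\phi$ and the curvature formula) from the holomorphicity of the Hopf differential, Proposition \ref{prop:Hopf}, together with a Bochner-type identity for the energy densities. Throughout, subscripts on $F$ denote partial derivatives in $u,\bar u$.

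First I would introduce the holomorphic and antiholomorphic energy densities $\mathcal{H}=e^{F}|\partial_z u|^2$ and $\mathcal{L}=e^{F}|\partial_{\bar z}u|^2$, and note that $\mathcal{H}\mathcal{L}=|\Lambda|^2$, where $\Lambda=e^{F}\partial_z u\,\partial_z\bar u$ is the Hopf coefficient. Multiplying numerator and denominator by $e^{F}\overline{\partial_z u}$ rewrites the Beltrami ratio as $\partial_{\bar z}u/\partial_z u=\overline{\Lambda}/\mathcal{H}$, so its modulus is $\sqrt{\mathcal{L}/\mathcal{H}}$ and its argument is $-\arg\Lambda$. Since $u$ is harmonic, Proposition \ref{prop:Hopf} gives $\Lambda=e^{-\mu(z)}$ with $\mu$ holomorphic; writing $\mu=\psi+i\phi$, the modulus forces $w=\tfrac14\log(\mathcal{H}/\mathcal{L})$ and the argument gives exactly the phase $\phi=\operatorname{Im}\mu$. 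Harmonicity of $\phi$ is then immediate, as the imaginary part of a holomorphic function, and $\psi=\operatorname{Re}\mu$ is precisely its conjugate harmonic function; this settles the first assertion. The same bookkeeping records $\log\mathcal{H}=-\psi+2w$ and $\log\mathcal{L}=-\psi-2w$, hence $\mathcal{H}-\mathcal{L}=2e^{-\psi}\sinh 2w$.

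The analytic core is the Bochner computation of $\partial_z\partial_{\bar z}\log\mathcal{H}$. Starting from $\log\mathcal{H}=F+\log\partial_z u+\log\overline{\partial_z u}$ and differentiating, I would substitute the harmonic map equation $\partial_z\partial_{\bar z}u=-F_u\,\partial_z u\,\partial_{\bar z}u$ (and its conjugate) wherever second derivatives of $u$ occur. After a careful cancellation---the third-derivative contributions and all $F_{\bar u\bar u}$ and $F_{\bar u}^2$ terms drop out---this collapses to $\partial_z\partial_{\bar z}\log\mathcal{H}=F_{u\bar u}(|\partial_z u|^2-|\partial_{\bar z}u|^2)=F_{u\bar u}e^{-F}(\mathcal{H}-\mathcal{L})$. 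Invoking the curvature formula $K_N=-\tfrac12\Delta F\,e^{-F}=-2F_{u\bar u}e^{-F}$ turns this into $\partial_z\partial_{\bar z}\log\mathcal{H}=-\tfrac12 K_N(\mathcal{H}-\mathcal{L})$; the symmetry $z\leftrightarrow\bar z$, which interchanges $\mathcal{H}$ and $\mathcal{L}$, then gives $\partial_z\partial_{\bar z}\log\mathcal{L}=\tfrac12 K_N(\mathcal{H}-\mathcal{L})$.

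Finally I would combine these. Since $w=\tfrac14(\log\mathcal{H}-\log\mathcal{L})$, subtracting the two identities yields $w_{z\bar z}=-\tfrac14 K_N(\mathcal{H}-\mathcal{L})$, and inserting $\mathcal{H}-\mathcal{L}=2e^{-\psi}\sinh 2w$ gives $w_{z\bar z}=-\tfrac12 K_N e^{-\psi}\sinh 2w$, that is,
\[
K_N=-\frac{2w_{z\bar z}}{\sinh 2w}\,e^{\psi}.
\]
I expect the only real obstacle to be the middle step: organizing the second-order computation of $\partial_z\partial_{\bar z}\log\mathcal{H}$ so that the many terms produced by differentiating $F_u$, $F_{\bar u}$ and the two logarithmic factors cancel cleanly, leaving only the mixed second derivative $F_{u\bar u}$.
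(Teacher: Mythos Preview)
Your proof is correct and follows the standard route via the Bochner identities for the holomorphic and antiholomorphic energy densities, which is essentially the argument found in the references \cite{Minsky,Wolf1}. Note that the paper itself does not supply a proof of this proposition---it is stated as a known result attributed to Minsky and Wolf---so there is no internal proof to compare against. Your identification of $w$ and $\phi$ through $\partial_{\bar z}u/\partial_z u=\overline{\Lambda}/\mathcal{H}$, the use of Proposition~\ref{prop:Hopf} to extract the holomorphic $\mu=\psi+i\phi$, and the Bochner computation $\partial_z\partial_{\bar z}\log\mathcal{H}=F_{u\bar u}e^{-F}(\mathcal{H}-\mathcal{L})$ are all clean and accurate; the cancellation you flag as the ``only real obstacle'' does in fact go through exactly as you describe, leaving only the $F_{u\bar u}$ term.
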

		
		On the domain, we can choose a specific coordinate system in order to considerably facilitate the calculations. This \textit{specific}  system is defined by the conformal transformation 
		\begin{equation*}\label{eq:specific}
			Z= \int e^{-\mu(z)/2} \, dz.
		\end{equation*}  
		This transformation is well defined since we work on a simply connected subset and we can assume that the Hopf differential is not vanishing since its zeroes are isolated.
		In this specific system the above equations simplify by considering $\mu(z)=0$. In other words, under the assumption that the Hopf differential does not vanish, we may assume that $\mu=0$.
		In particular, the harmonic map condition (\ref{eq:Hopf}) becomes \begin{equation}\label{Hopfnew}
			e^{F(u, \bar{u})}\partial_{z} u\partial_{z}\bar{u}=1.
		\end{equation}

		It follows from Proposition \ref{PropMisnkyWolf} that the harmonic map $u$  determined by (\ref{Hopfnew}) satisfies the Beltrami equation (\ref{Beltrami eqn}), where in turn the function $ w(z, \bar{z})$  occurring in the Beltrami coefficient} satisfies the sinh-Gordon equation (\ref{sinh-Gordon eqn}).  Conversely, as we have already mentioned, if $w$ is a solution of the sinh-Gordon equation (\ref{sinh-Gordon eqn}) and  $u$ is a solution of the Beltrami equation (\ref{Beltrami eqn}), then, $u$ is a harmonic map,  \cite{FotDask}.
	As a result, there is a classification of harmonic diffeomorphisms via the classification of the solutions of the sinh-Gordon equation. For further details about the association of the harmonic map problem to the sinh-Gordon equation, we refer to \cite[Section 3]{FotDask}.
	From now on, we   use the aforementioned specific coordinate system.
	
	We assume that the target is the upper half hyperbolic plane
	\[
	\mathbb{H}^{2}=\{R+iS: \; R\in \mathbb{R},\; S>0\}, 
	\] equipped with the hyperbolic metric 
	\[h=\frac{dR^2+dS^2}{S^2}\]
	of curvature $-1$.
	As mentioned earlier, in the specific coordinate system, the map $u=R+iS$ is harmonic if
	(\ref{eq:HarmHyper}) holds true, or equivalently if the following system is satisfied:
	\begin{equation}\label{Cartesian1}
		\partial_{x}R \;\partial_{y}R + \partial_{x}S\;\partial_{y}S = 0
	\end{equation}
	\begin{equation}\label{Cartesian2}
		\frac{(\partial_{x}R)^{2} + (\partial_{x}S)^{2} - (\partial_{y}R)^{2} - (\partial_{y}S)^{2}}
		{S^{2}} = 4.
	\end{equation}
	
	Finally, as we have described, our approach has as its starting point the B{\"a}cklund transformation (\ref{Back1intro})-(\ref{Back2intro}). For computational reasons, let us point out that it is more convenient to verify that a pair $(w, \theta)$ is a solution if the following system holds:
	\begin{align*}
		\partial_x W\;(1+\Theta^2) - \partial_y\Theta\;(1-W^2)&=-4W\Theta,\\
		\partial_y W\;(1+\Theta^2) + \partial_x\Theta\;(1-W^2)&=-(1+W^2)(1-\Theta^2),
	\end{align*}
	where $W=\tanh\frac{w}{2}$ and $\Theta=\tan\frac{\theta}{2}$.
	
	\section{Proof of Proposition \ref{Theorem2}}\label{sec:Harmonic maps to the upper half-plane model}

	In this section we consider $(w,\theta)\in (BT)$.  In other words, $w$ and $\theta$ are related by the B{\"a}cklund transformation.  Our aim is to construct a harmonic map that corresponds to $w$. As usual, the target is assumed to be the upper half-plane equipped with the hyperbolic metric.
	We first observe that the following result holds true. 
		\begin{lemma}Assume that $(w,\theta)\in (BT)$. Then, there exist functions $R$ and $S$ such that
			\begin{align}
				\partial_xS&=2S\cosh w\sin \theta \label{Sx}\\
				\partial_yS&=2S\sinh w\cos \theta \label{Sy}\\
				\partial_xR&=2S\cosh w\cos \theta \label{Rx}\\
				\partial_yR&=-2S\sinh w\sin \theta \label{Ry}.
			\end{align} 
		\end{lemma}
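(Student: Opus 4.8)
The plan is to proceed in three stages: first show that the overdetermined first-order system (\ref{Sx})--(\ref{Ry}) is \emph{integrable}, so that genuine functions $R$ and $S$ exist; then verify the harmonic-map equations (\ref{Cartesian1})--(\ref{Cartesian2}); and finally check that $u=R+iS$ satisfies the Beltrami equation (\ref{Beltrami eqn}), i.e. that $u$ corresponds to $w$.

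For the integrability of $S$, note that (\ref{Sx})--(\ref{Sy}) is equivalent to $\partial_x(\log S) = 2\cosh w\sin\theta$ and $\partial_y(\log S) = 2\sinh w\cos\theta$; the mixed-partials condition $\partial_y(2\cosh w\sin\theta) = \partial_x(2\sinh w\cos\theta)$ then expands to $\sinh w\sin\theta\,(\partial_yw+\partial_x\theta) = \cosh w\cos\theta\,(\partial_xw-\partial_y\theta)$, which holds because, by the B{\"a}cklund relations (\ref{Back1intro})--(\ref{Back2intro}), both sides equal $-2\sinh w\cosh w\sin\theta\cos\theta$. Hence $\log S$, and therefore $S$, exists, and $S$ may be taken positive with any prescribed value $S(0,0)>0$ (so the image genuinely lies in $\mathbb{H}^2$). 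With $S$ now fixed, (\ref{Rx})--(\ref{Ry}) is solvable for $R$ (up to the constant $R(0,0)$) provided $\partial_y(2S\cosh w\cos\theta) = \partial_x(-2S\sinh w\sin\theta)$; differentiating, substituting (\ref{Sx})--(\ref{Sy}) for $\partial_xS$ and $\partial_yS$, and factoring out $2S$, this condition reduces to $2\sinh w\cosh w + \sinh w\cos\theta\,(\partial_yw+\partial_x\theta) + \cosh w\sin\theta\,(\partial_xw-\partial_y\theta) = 0$, which is again immediate from (\ref{Back1intro})--(\ref{Back2intro}) together with $\sin^2\theta+\cos^2\theta=1$. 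This compatibility bookkeeping is the only genuinely computational part, and I expect it to be the main (in fact essentially the only) obstacle; everything after it is direct substitution.

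For the second stage, substituting the four first-order formulas into (\ref{Cartesian1}) gives $\partial_xR\,\partial_yR + \partial_xS\,\partial_yS = -4S^2\sinh w\cosh w\sin\theta\cos\theta + 4S^2\sinh w\cosh w\sin\theta\cos\theta = 0$, while the numerator of (\ref{Cartesian2}) becomes $(\partial_xR)^2+(\partial_xS)^2-(\partial_yR)^2-(\partial_yS)^2 = 4S^2\cosh^2 w - 4S^2\sinh^2 w = 4S^2$, so (\ref{Cartesian2}) holds; hence $u=R+iS$ is a harmonic map into $\mathbb{H}^2$ (this last conclusion can also be read off from the Beltrami equation below via \cite{FotDask}). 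For the third stage I would compute, using $\partial_z u = \tfrac12(\partial_xR+\partial_yS)+\tfrac{i}{2}(\partial_xS-\partial_yR)$ and $\partial_{\bar{z}}u = \tfrac12(\partial_xR-\partial_yS)+\tfrac{i}{2}(\partial_xS+\partial_yR)$ together with $\cosh w\pm\sinh w = e^{\pm w}$, that $\partial_z u = Se^{w}e^{i\theta}$ and $\partial_{\bar{z}}u = Se^{-w}e^{i\theta}$; therefore $\partial_{\bar{z}}u/\partial_z u = e^{-2w}$, which is exactly (\ref{Beltrami eqn}). Hence $u$ corresponds to $w$, and the lemma follows.
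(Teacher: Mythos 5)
Your proposal is correct and follows essentially the same route as the paper: existence of $R,S$ via the mixed-partials conditions being consequences of (\ref{Back1intro})--(\ref{Back2intro}), the computation $\partial_z u=Se^{w}e^{i\theta}$, $\partial_{\bar z}u=Se^{-w}e^{i\theta}$ giving the Beltrami equation, and the identification $\partial_z u\,\partial_z\bar u=S^2$ (which you phrase equivalently through (\ref{Cartesian1})--(\ref{Cartesian2})) giving harmonicity into $\mathbb{H}^2$. The only difference is that you write out explicitly the compatibility computations that the paper merely asserts are equivalent to the B\"acklund system.
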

		\begin{proof}	The existence of the functions $R$ and $S$ is ensured by the B{\"a}cklund transformation of $w$ and $\theta$. Indeed, the compatibility conditions $$\partial_{xy}^2 S=\partial_{yx}^2 S,\quad  \partial_{xy}^2 R=\partial_{yx}^2 R$$ 
			follow by  the B{\"a}cklund transformation equations (\ref{Back1intro})-(\ref{Back2intro}).
		\end{proof} 
		\noindent	\textbf{Remark.} The Jacobian of the map $u$ is $J(u)=2S^2 \sinh{2w}$.
	
	Our next goal is to prove the following result, which plays a key role for the proof of Proposition \ref{Theorem2}.
	
	\begin{lemma}Assume that $(w,\theta)\in (BT)$ and let $(R,S)$ satisfy the system of equations (\ref{Sx})-(\ref{Ry}).
		Then, 
		\[
		u(x,y)=R(x,y)+iS(x,y)
		\]
		is the harmonic map to the hyperbolic plane that corresponds to $w$.
	\end{lemma}
	\begin{proof}
		
		Consider \[
		u=R+iS,
		\]
		and observe that
		\[
		\partial_zu=\frac{1}{2}((\partial_xR+\partial_yS)+i(\partial_xS-\partial_yR)).
		\]
		Then (\ref{Sx})-(\ref{Ry}), imply that
		\begin{equation}\label{uz}
			\partial_zu=S(\cosh w+\sinh w)(\cos\theta+i\sin\theta)=Se{^w} e^{i\theta}.
		\end{equation}
		Similarly, we compute
		\begin{equation}\label{uzbar}
			\partial_{\bar{z}}u=Se^{-w} e^{i\theta}.\end{equation}
		By (\ref{uz}) and (\ref{uzbar}), it follows that
		\[
		\frac{\partial_{\bar{z}}u}{\partial_zu}=e^{-2w}.
		\]
		Furthermore, by (\ref{Hopfnew}), the conformal factor in the target metric is equal to \[e^{F(R,S)}=\frac{1}{\partial_zu\partial_z\bar{u}}=\frac{1}{S^2}.\]
		Thus, $u$ is a harmonic map to the hyperbolic upper half hyperbolic plane that corresponds to $w$.
	\end{proof}

	Given now $w$ and $\theta$, we   find the associated harmonic map $u=R+iS$. More precisely, we provide the implicit formulas for $R$ and $S$ that involve the integrals $I_1, I_2, I_3, I_4$.

	\begin{proof}[End of the proof of Proposition \ref{Theorem2}]
		{{Our task is now to} solve {the }system (\ref{Sx})-(\ref{Ry}). {We} begin with (\ref{Sx})} for $y=0$, which yields
		\[
		\frac{\partial_xS}{S}(x,0)=2\cosh w(x,0)\sin\theta(x,0).
		\]
		Integrating, we obtain 
		\begin{equation*}\label{S(x,0)}
			S(x,0)=S(0,0)e^{2\int_0^x \cosh w(t,0)\sin\theta(t,0)dt}.
		\end{equation*}
		So, (\ref{Sy}) implies that
		\begin{equation*}
			S(x,y)=S(0,0)e^{2\int_0^x \cosh w(t,0)\sin\theta(t,0)dt+2\int_0^y \sinh w(x,s)\cos\theta(x,s)ds}.
		\end{equation*}
		Thus
		\begin{align*}
			S(x,y)=S(0,0)e^{2(I_1+I_2)}. 
		\end{align*}
		
		In order now to compute $R(x,y)$, we consider (\ref{Rx}) for $y=0$, which yields
		\begin{equation*}\label{R(x,0)}
			R(x,0)=R(0,0)+2\int_0^xS(t,0)\cosh w(t,0)\cos\theta(t,0)dt.
		\end{equation*}
		So, (\ref{Ry}) implies that
		\begin{align*}\
			R(x,y)=R(0,0)&+2\int_0^xS(t,0)\cosh w(t,0)\cos\theta(t,0)dt\notag\\&-2\int_0^yS(x,s)\sinh w(x,s)\sin\theta(x,s)ds, 
		\end{align*}
		thus
		\begin{align*}
			R(x,y)=R(0,0)+2S(0,0)(I_3-I_4). 
		\end{align*}
	\end{proof}
	
	We now provide a concrete example of a harmonic map to show how the algorithm in Proposition \ref{Theorem2} can be implemented. In particular, we recover the harmonic map obtained in \cite[Section 7]{FotDask}.
	\begin{example}\label{ex FD}
		Suppose that $\tanh{\frac{w(x,y)}{2}}=\frac{2y}{\cosh{2x}}$ and consider the case $R(0,0)=0$, $S(0,0)=-\frac{1}{4}$ as in {\cite[p.22]{FotDask}}.  Using the B{\"a}cklund transformation we find that 
		\[
		\tan\frac{\theta(x,y)}{2}=\coth{x}.
		\] 
		
		Then, a lengthy calculation yields
		\begin{align*}
			I_1 &= \frac{1}{2} \log{\cosh{2x}},\\
			I_{2}&=\frac{1}{2}\log{\left( 1-\frac{4 y^2}{\cosh^2{2x}}\right)},\\
			I_3 &= -x,\\
			I_4 &=2y^2 \tanh{2x},
		\end{align*}
		{and} therefore,
		\begin{align*}
			R&= \frac{x}{2}+y^2 \tanh{2x} \\
			S&= \frac{y^2}{\cosh{2x}}-\frac{\cosh{2x}}{4}.
		\end{align*}
		One can verify by (\ref{Cartesian1})-(\ref{Cartesian2}) that $u=R+iS$ is a harmonic map to the hyperbolic plane that corresponds to $w$. 
		
		The domain of definition of $u$ is the set \[\Omega=\left\{ (x,y): \;  \left| y\right|<\frac{1}{2}\cosh{2x}\right\}\] where the map is a well defined $C^{2}$ map whose Jacobian is almost everywhere non vanishing. 
	\end{example}

	\section{The sinh-Gordon equation}\label{sec:The sinh-Gordon equation}
	
	In this section we provide a new family of solutions of the sinh-Gordon equation.    
	
	The motivation for obtaining solutions of the sinh-Gordon equation is their correspondence to harmonic maps between surfaces, as described in Section \ref{sec:Preliminaries}.  In addition, it is exactly the form of the special solutions to the sinh-Gordon equation considered in this section, that will allow explicit expressions of harmonic maps to hyperbolic plane. We therefore postpone our main goal, the construction of harmonic maps, to the next section. 
	
	In \cite[Section 5]{FotDask}, the authors construct harmonic maps when $w$ is a one-soliton solution to (\ref{sinh-Gordon eqn}). In some sense, they obtain solutions to this sinh-Gordon equation that are of the form 
	\[
	w(x,y)=2\arctanh({F(x)}).
	\]
	In what follows,  we focus on solutions to the sinh-Gordon equation (\ref{sinh-Gordon eqn}) that are of the form (\ref{separable}), i.e. $w(x,y)=2\arctanh({F(x)}{G(y)})$.
	Without loss of generality, we assume that $F$ and $G$ are non-constant functions. 
	
	Notice that condition  (\ref{separable}) is equivalent to $b=\frac{\partial_{x}w(x,y)}{2\sinh{w(x,y)}}=b(x),$ where $b(x)=\frac{F'(x)}{2F(x)}$. 
	One can prove that (\ref{separable}) is also equivalent to the differential equation
		\begin{equation}\label{sep2}
			\partial_{xy}^{2}w(x,y)=\partial_{x}w(x,y)\partial_{y}w(x,y)\coth{w(x,y)}.
		\end{equation}
		Indeed, for the one direction, starting from $\tanh\frac{w}{2}=FG$ we get $\partial_x w=\frac{F'}{F}\sinh w$ and $\partial_y w=\frac{G'}{G}\sinh w$. Thus, differentiating $w_x$ in $y$ and using $w_y$, we get (\ref{sep2}). Conversely, integrating $\frac{\partial^2_{xy}w}{\partial_x w}=\partial_y w \coth w$ in $y$ yields $\frac{\partial_x w}{\sinh w}=b(x)$. Integrating in $x$, we get for some $c(y)$ that $\log\left| \tanh\frac{w}{2} \right|=\int b(x)dx+c(y)$, whence the claim follows.
	In \cite{Abr}, the authors were lead to this equation for geometric reasons related to CMC surfaces. It is an open problem to relate this equation with the geometry of harmonic maps. 
	
	The basic idea is that under condition (\ref{separable}), we can solve this sinh-Gordon equation by a separation of variables argument. Then, the problem is  reduced to solving certain non-linear ODEs, which  in turn can be solved by the use of elliptic functions.
	
	We now prove the following result. 
	\begin{proposition}\label{w thm}
		{If $w$ is} a solution of the sinh-Gordon equation (\ref{sinh-Gordon eqn})
		of the form 
		\begin{equation*}
			w(x,y)=2\arctanh({F(x)}{G(y)}),
		\end{equation*}
		then the functions $F,G$ satisfy the differential equations
		\begin{align}
			(F^{\prime}(x))^2&=AF^4(x)+BF^2(x)+C \label{odef}\\
			(G^{\prime}(y))^2&=-CG^4(y)-(B-4)G^2(y)-A \label{odeg}, 
		\end{align}
		where $A,B,C$ are arbitrary constants.
	\end{proposition}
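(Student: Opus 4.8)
\emph{Proof strategy.} The plan is to substitute the ansatz $w=2\arctanh(F(x)G(y))$ into the sinh-Gordon equation and then separate variables. Set $W=\tanh\tfrac{w}{2}=F(x)G(y)$ and recall the identities $\sinh w=\tfrac{2W}{1-W^{2}}$, $\cosh w=\tfrac{1+W^{2}}{1-W^{2}}$, $\sinh 2w=\tfrac{4W(1+W^{2})}{(1-W^{2})^{2}}$, and $\partial_{x}w=\tfrac{2\,\partial_{x}W}{1-W^{2}}$ (and similarly for $\partial_y w$). A direct computation then shows that \eqref{sinh-Gordon eqn} is equivalent to
\[
(\Delta W)(1-W^{2})+2W\bigl((\partial_{x}W)^{2}+(\partial_{y}W)^{2}\bigr)=4W(1+W^{2}).
\]
Inserting $W=FG$, dividing by $FG$ --- legitimate where $FG\neq 0$, and the resulting polynomial identities then propagate to the whole domain since solutions of \eqref{sinh-Gordon eqn} are real-analytic --- and regrouping, I would reduce this to
\begin{equation}
\frac{F''}{F}+\frac{G''}{G}+G^{2}\bigl(2(F')^{2}-FF''\bigr)+F^{2}\bigl(2(G')^{2}-GG''\bigr)-4F^{2}G^{2}=4. \tag{$\star$}
\end{equation}

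Next I would run a separation-of-variables argument on $(\star)$. Abbreviating $p=F''/F$, $q=2(F')^{2}-FF''$, $P=F^{2}$ (functions of $x$) and $r=G''/G$, $t=G^{2}$, $\sigma=2(G')^{2}-GG''-4G^{2}$ (functions of $y$), equation $(\star)$ reads $p-4+qt+r+P\sigma=0$. Differentiating in $x$ gives $p'+q't+P'\sigma=0$, and differentiating this in $y$ gives $q't'+P'\sigma'=0$. Since $F$ is non-constant, $P'$ is not identically zero, so at a point where $P'\neq 0$ one gets $\sigma'=\lambda t'$ for a constant $\lambda$, hence $\sigma=\lambda t+\mu$. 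Substituting back into $p'+q't+P'\sigma=0$ and using that $t=G^{2}$ is non-constant forces $q'=-\lambda P'$ and $p'=-\mu P'$, hence $q=-\lambda P+\nu$ and $p=-\mu P+\rho$. From $p=-\mu P+\rho$ we get $F''=-\mu F^{3}+\rho F$; multiplying by $2F'$ and integrating gives $(F')^{2}=-\tfrac{\mu}{2}F^{4}+\rho F^{2}+C$, which is \eqref{odef} with $A=-\mu/2$ and $B=\rho$.

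Finally I would recover \eqref{odeg}. Feeding $p$, $q$, $\sigma$ above back into $(\star)$ cancels all $P$- and $Pt$-terms and leaves $r=(4-\rho)-\nu t$, i.e.\ $G''=(4-\rho)G-\nu G^{3}$; integrating as before gives $(G')^{2}=(4-\rho)G^{2}-\tfrac{\nu}{2}G^{4}+C_{2}$ for some constant $C_{2}$. To identify all the constants, differentiate \eqref{odef} to get $F''=2AF^{3}+BF$; then $q=2(F')^{2}-FF''=BF^{2}+2C$, and comparison with $q=-\lambda P+\nu$ forces $\nu=2C$, $\lambda=-B$, $\rho=B$. The same computation on the $G$-side gives $2(G')^{2}-GG''=(4-B)G^{2}+2C_{2}$, whereas $\sigma=\lambda t+\mu$ reads $2(G')^{2}-GG''=(4-B)G^{2}+\mu$; hence $2C_{2}=\mu=-2A$, and therefore $(G')^{2}=-CG^{4}-(B-4)G^{2}-A$, which is \eqref{odeg}. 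I expect the main obstacle to be precisely this last bookkeeping --- pinning the constant of integration in the $G$-equation to exactly $-A$ rather than leaving it free, which is forced only by re-entering $(\star)$ --- while the separation-of-variables step and the hyperbolic-identity reduction leading to $(\star)$, although mildly tedious, are routine.
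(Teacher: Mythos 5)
Your argument is correct and follows essentially the same route as the paper's proof: substitute the ansatz into the sinh-Gordon equation, differentiate the resulting mixed identity in $x$ and then in $y$ to separate variables, integrate the decoupled ODEs, and plug back into the original identity to pin the integration constants (yielding exactly the coupling $A\leftrightarrow -A$, $B\leftrightarrow -(B-4)$, $C\leftrightarrow -C$). The only cosmetic differences are that the paper first substitutes $H=1/G$ and works with $w=2\arctanh(F/H)$, and that it compresses the constant bookkeeping you carry out explicitly into the phrase ``integrating twice and taking into account (\ref{eq:sinhSOL})''.
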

	
	\begin{proof}
		Set $G(y)=\frac{1}{H(y)}$.	 Writing $w=2\arctanh\frac{F(x)}{H(y)}$, we find from {(\ref{sinh-Gordon eqn})} that
		\begin{align}
			&\;\frac{F^{\prime\prime}(x)}{F(x)}\left({H^2(y)}
			-{F^2(x)}\right)+2F^{\prime}(x)^2 
			-\frac{H^{\prime\prime}(y)H(y)-2H^{\prime}(y)^2}{H(y)^2}\times \notag \\
			&\times\left({H^2(y)}-{F^2(x)}\right)+2F^2(x)\frac{H^{\prime}(y)^2}{H^2(y)} \label{eq:sinhSOL} \\&=4\left(F^2(x)+H^2(y)\right).\notag
		\end{align}
		Differentiating with respect to $x$ and $y$, we obtain
		\[
		2H(y)H^{\prime}(y)\left( \frac{F^{\prime\prime}(x)}{F(x)} \right)^{\prime}+2F(x)F^{\prime}(x)\left( \frac{H^{\prime\prime}(y)}{H(y)} \right)^{\prime}=0,
		\]
		or
		\begin{equation*}\label{7}
			\frac{1}{F(x)F^{\prime}(x)}\left( \frac{F^{\prime\prime}(x)}{F(x)} \right)^{\prime}=-\frac{1}{H(y)H^{\prime}(y)}\left( \frac{H^{\prime\prime}(y)}{H(y)} \right)^{\prime}=4A=\text{constant}.
		\end{equation*}
		Therefore, 
		\begin{align*}
			\left( \frac{F^{\prime\prime}(x)}{F(x)} \right)^{\prime}=4AF(x)F^{\prime}(x), \quad\left( \frac{H^{\prime\prime}(y)}{H(x)} \right)^{\prime}=4AH(y)H^{\prime}(y).
		\end{align*}
		Integrating twice and taking into account (\ref{eq:sinhSOL}), we obtain
		\begin{equation*}\label{8}
			F^{\prime}(x)^2=AF^4(x)+BF^2(x)+C,
		\end{equation*}
		and	
		\begin{equation*}
			H^{\prime}(y)^2=-AH^4(y)-(B-4)H^2(y)-C.
		\end{equation*}
		Finally, we find
		\[
		G^{\prime}(y)^2=-CG^4(y)-(B-4)G^2(y)-A.
		\]
	\end{proof}

	An argument similar to the one used in \cite{K}, reveals that the following result, which is a special case of Proposition \ref{w thm}, holds true.
	\begin{proposition}\label{Kenmotsu}
		Let $w_0>0$, and $\alpha,\beta>0$, such that
		\[
		\alpha+\beta=\cosh {w_0}>1.
		\]
		Consider $f(x)$, $g(y)$ such that
		\begin{align*}
			(f')^2&=f^4-4\left(1+\alpha^2-\beta^2\right)f^2+4^2  \alpha^2  
			\\
			(g')^2&=g^4-4\left(1+\beta^2-\alpha^2\right)g^2+4^2  \beta^2,
		\end{align*}
		with $f(0)=0$, $f'(0)=-4\alpha$, $g(0)=0$, $g'(0)=-4 \beta$.
		Then the function $w(x,y)$  given by
		\begin{equation*}
			\tanh\frac{w(x,y)}{2}=\tanh\frac{w_0}{2}e^{-\int_0^xf(t)dt}e^{-\int_0^yg(s)ds},
		\end{equation*}
		is such that  $\Delta w=2\sinh(2w)$ and $w(0,0)=w_0$.
	\end{proposition}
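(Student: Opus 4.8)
The plan is to verify directly that the stated function $w$ solves $\Delta w = 2\sinh(2w)$ with $w(0,0)=w_0$, by recognizing it as an instance of Proposition \ref{w thm} with a carefully chosen triple $(A,B,C)$ and then checking the one remaining constraint — equation (\ref{eq:sinhSOL}), which fixes the constants of integration. First I would write $W(x,y)=\tanh\frac{w}{2}=\tanh\frac{w_0}{2}\,e^{-\int_0^x f}\,e^{-\int_0^y g}$, so that in the notation of Proposition \ref{w thm} one has $F(x)=\tanh\frac{w_0}{2}\,e^{-\int_0^x f(t)\,dt}$ (absorbing the constant into $F$) and $G(y)=e^{-\int_0^y g(s)\,ds}$, hence $F'(x)=-f(x)F(x)$ and $G'(y)=-g(y)G(y)$, giving $(F')^2 = f^2 F^2$ and $(G')^2 = g^2 G^2$. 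Substituting the quartics for $f^2$ and $g^2$ then yields $(F')^2$ as a quartic in $F$ and $(G')^2$ as a quartic in $G$; matching these against (\ref{odef})–(\ref{odeg}) identifies $A,B,C$ in terms of $\alpha,\beta$ (up to the rescaling coming from the factor $\tanh^2\frac{w_0}{2}$ hidden in $F$). The normalization $\alpha+\beta=\cosh w_0$, equivalently $\alpha+\beta = \frac{1+\tanh^2(w_0/2)}{1-\tanh^2(w_0/2)}\cdot$(something), is exactly what is needed to make the two quartics compatible with a single $(A,B,C)$.

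Next I would recall from the proof of Proposition \ref{w thm} that (\ref{odef})–(\ref{odeg}) are only \emph{necessary}: they come from differentiating (\ref{eq:sinhSOL}) in both $x$ and $y$, so a solution of the ODE system solves the sinh-Gordon equation if and only if the constants of integration are chosen so that (\ref{eq:sinhSOL}) itself holds at one point. Since (\ref{eq:sinhSOL}), after the $x$- and $y$-differentiation, becomes an identity, it suffices to check it at $(x,y)=(0,0)$. Here the initial data $f(0)=0$, $g(0)=0$, $f'(0)=-4\alpha$, $g'(0)=-4\beta$, $F(0)=\tanh\frac{w_0}{2}$, $G(0)=1$ make everything explicit: $F'(0)=0$, $G'(0)=0$, and $F''(0)$, $G''(0)$ are read off by differentiating $F'=-fF$, $G'=-gG$ once more, giving $F''(0)=-f'(0)F(0)=4\alpha\tanh\frac{w_0}{2}$ and similarly $G''(0)=4\beta$. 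Plugging these, together with $H=1/G$ so $H(0)=1$, $H'(0)=0$, $H''(0)=-G''(0)=-4\beta$, into (\ref{eq:sinhSOL}) reduces it to a purely algebraic relation among $\alpha$, $\beta$, and $\tanh\frac{w_0}{2}$, which I expect to collapse to precisely $\alpha+\beta=\cosh w_0$. Finally $w(0,0)=w_0$ is immediate since $\tanh\frac{w(0,0)}{2}=\tanh\frac{w_0}{2}\cdot 1\cdot 1$.

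The main obstacle I anticipate is purely bookkeeping: carrying the factor $\tanh^2\frac{w_0}{2}$ correctly through the identification of $(A,B,C)$, and confirming that the \emph{same} triple works for both the $F$-equation and the $G$-equation — this is where the hypothesis $\alpha+\beta=\cosh w_0$ must be used, and getting the algebra to close is the crux. A secondary subtlety is that Proposition \ref{w thm} was stated as ``if $w$ solves sinh-Gordon then the ODEs hold,'' so one must argue the converse direction carefully: the differentiated form of (\ref{eq:sinhSOL}) is automatically satisfied by any solution of (\ref{odef})–(\ref{odeg}) (both sides being constants whose equality is built into the definition of $A$ and the integration), and then the single point-check at the origin upgrades this to the full equation (\ref{eq:sinhSOL}), hence to sinh-Gordon. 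I would present the computation by first stating the values of $(A,B,C)$, then verifying (\ref{odef})–(\ref{odeg}), and finally doing the one-point verification of (\ref{eq:sinhSOL}); the reference to ``an argument similar to \cite{K}'' suggests the reader is expected to supply the routine elliptic-integral manipulations, so I would keep those terse.
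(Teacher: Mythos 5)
Your overall strategy (realize $w$ as $2\arctanh(F(x)G(y))$ and invoke the framework of Proposition \ref{w thm}) is exactly the route the paper intends --- the paper itself gives no proof, only the remark that this is ``a special case of Proposition \ref{w thm}'' --- but your write-up has a genuine gap at its computational heart. From $F(x)=\tanh\frac{w_0}{2}e^{-\int_0^x f}$ you correctly get $(F')^2=f^2F^2$, but the hypotheses give a quartic for $(f')^2$ \emph{in terms of $f$}, not an expression for $f^2$ in terms of $F$; so ``substituting the quartics for $f^2$ and $g^2$'' is not a legitimate step. What is actually needed is the first integral
\begin{equation*}
f^2=AF^2+B+\frac{C}{F^2},\qquad B=2(1+\alpha^2-\beta^2),\quad A\,\tanh^2\tfrac{w_0}{2}=\tfrac{4\alpha-B}{2},\quad \frac{C}{\tanh^2\tfrac{w_0}{2}}=-\tfrac{B+4\alpha}{2},
\end{equation*}
and its analogue for $g,G$, and this does not follow by substitution: it must be proved, e.g.\ by letting $\tilde F$ solve $(\tilde F')^2=A\tilde F^4+B\tilde F^2+C$ with $\tilde F(0)=\tanh\frac{w_0}{2}$, $\tilde F'(0)=0$, checking that $-\tilde F'/\tilde F$ satisfies the same second-order problem $y''=2y^3-4(1+\alpha^2-\beta^2)y$, $y(0)=0$, $y'(0)=-4\alpha$ as $f$, and concluding $\tilde F=F$ by uniqueness (the compatibility $4AC=B^2-16\alpha^2$ is what makes this work). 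Once both quartic ODEs are in hand, the hypothesis $\alpha+\beta=\cosh w_0$, i.e.\ $\tanh^2\frac{w_0}{2}=\frac{\alpha+\beta-1}{\alpha+\beta+1}$, enters precisely in matching the two triples in the pattern $(A',B',C')=(-C,4-B,-A)$ required by (\ref{odef})--(\ref{odeg}); the relation $B+B'=4$ holds automatically.

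Your treatment of the converse of Proposition \ref{w thm} is also logically shaky: annihilating the mixed derivative of (\ref{eq:sinhSOL}) only shows the defect has the form $\phi(x)+\psi(y)$, so a check at the single point $(0,0)$ does not by itself upgrade to the full equation. (In this particular problem the origin check does happen to suffice, because with the explicit constants the defect is a common factor times $(\alpha+\beta-1)-\tanh^2\frac{w_0}{2}(\alpha+\beta+1)$, but you have not established that structure.) The clean fix is to verify the converse directly: if $(F')^2=AF^4+BF^2+C$ and $(G')^2=-CG^4-(B-4)G^2-A$ with the same triple, then using $F''=2AF^3+BF$ and $G''=-2CG^3-(B-4)G$ the equation $(1-F^2G^2)(F''G+FG'')+2FG\bigl((F')^2G^2+F^2(G')^2\bigr)=4FG(1+F^2G^2)$, which is sinh--Gordon for $w=2\arctanh(FG)$, reduces to an identity --- no point-check needed. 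With the first-integral step supplied and the converse argued this way, your proof closes; $w(0,0)=w_0$ is, as you say, immediate.
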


	Finally, note that in general, $F$ and $G$ in Proposition \ref{w thm}, and $f$ and $g$ in Proposition \ref{Kenmotsu} respectively, are given in terms of elliptic functions, but reduce to elementary functions for certain choices of the coefficients of the ODEs.  In particular, we observe that the only case where one of the functions $f$, $g$ is elementary is when $|\alpha-\beta|=1$. In this case actually both functions $f$, $g$ end up to be elementary.

	\section{A new family of harmonic maps}\label{sec:A new family of harmonic maps}\
	
	In this section we  assume that $(w,\theta)\in (BT_0)$  and $w$ is of the form (\ref{separable}). In other words,  the functions $w=2\arctanh(F(x)G(y))$ and $\theta$ are related by the B{\"a}cklund transformation and $\partial_{y}w(x,0)=0$, $\theta(0,0)=\frac{\pi}{2}$. We start with a given solution $w$ of the sinh-Gordon equation and we find a formula for $\theta$ using the B{\"a}cklund transformation. We finally find a formula for the harmonic map $u$ that corresponds to $w$. 
	
	\subsection{B{\"a}cklund transformation} \label{sec:Backlund transform of the sinh-Gordon equation}
	
	Our first task is to solve the system (\ref{Back1intro})-(\ref{Back2intro}). Suppose that $w$ is a solution of the sinh-Gordon equation, of the form
	\begin{equation}\label{omega1}
		\tanh \frac{w}{2}=F(x)G(y).
	\end{equation}
	
	Define 
	\begin{align}
		X&=X(x)=\int_0^x \cosh w(t,0)dt=\int_0^x \frac{1+F^2(t)G^{2}(0)}{1-F^2(t)G^{2}(0)}dt, \label{Xvar}\\
		Y&=Y(x,y)=\int_0^y \sinh w(x,s)ds=\int_0^y \frac{2F(x)G(s)}{1-F^2(x)G^2(s)}ds\label{Yvar}.
	\end{align}
	
	The new variable $Y$ appears in the following lemma, which serves as an intermediate step for the formula of the harmonic map in Theorem \ref{new class}, where the other variable $X$ appears. We claim that we can find explicit expressions of $X$ and $Y$ in terms of $x$ and $y$. Assuming for the moment that this claim is true, we complete the construction of $\theta$ as follows, in the special case when {$\partial_{y}w(x,0)=0,$ $\theta(0,0)=\frac{\pi}{2}$.}

	\begin{lemma}\label{BackLemma}
		If $	w(x,y)=2\arctanh({F(x)}{G(y)})$, $b(x)=\frac{F^{\prime}(x)}{2F(x)}$ and $(w,\theta)\in (BT_0)$, then 
		\begin{equation*}
			\tan\frac{\theta(x,y)}{2}=\frac{1}{b(x)}\left( \sqrt{b^2(x)-1}\tan({J_1(x)+J_2(x,y)}) -1 \right),%\label{tanthetaXY}
		\end{equation*}
		where{
			\begin{align*}
				J_1=J_1(x)=\arctan \left( \frac{b(x)+1}{\sqrt{b^2(x)-1}}  \right),
		\end{align*}}
		and
		\begin{align*}
			J_2=J_2(x,y)=\sqrt{b^2(x)-1}\;Y(x,y).
		\end{align*}
	\end{lemma}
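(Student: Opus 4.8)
The plan is to solve the Bäcklund system (\ref{Back1intro})--(\ref{Back2intro}) directly, exploiting the hypothesis $w=2\arctanh(F(x)G(y))$ together with the normalization $(w,\theta)\in (BT_0)$. The cleanest route is to work with the variable $\Theta=\tan\frac{\theta}{2}$, since the system written in terms of $W=\tanh\frac w2$ and $\Theta$ (given at the end of Section \ref{sec:Preliminaries}) is polynomial. First I would restrict attention to the curve $y=0$. There $\partial_y w(x,0)=0$ by the $(BT_0)$ condition, so (\ref{Back1intro}) becomes an ODE in $x$ for $\theta(x,0)$, and $b(x)=\frac{F'(x)}{2F(x)}$ is precisely $\frac{\partial_x w}{2\sinh w}$ evaluated on all of the domain (this is the reformulation of (\ref{separable}) noted in Section \ref{sec:The sinh-Gordon equation}); on $y=0$ one gets $\partial_x w(x,0)=2b(x)\sinh w(x,0)$. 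Substituting into (\ref{Back1intro}) yields a Riccati-type ODE in $\Theta(x,0)$ whose coefficients involve only $b(x)$ and $\cosh w(x,0),\sinh w(x,0)$. The point is that the $\sinh w$, $\cosh w$ factors combine so that the equation becomes separable after the substitution $\Theta=\frac{1}{b}(\sqrt{b^2-1}\,T-1)$; solving it with the initial value $\theta(0,0)=\frac\pi2$, i.e. $\Theta(0,0)=1$, fixes the integration constant and produces the $\arctan$ term $J_1(x)$.

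Next I would propagate off the axis using (\ref{Back2intro}) (equivalently its $W,\Theta$ form). Fixing $x$ and viewing $\theta$ as a function of $y$, equation (\ref{Back2intro}) reads $\partial_y\theta=-2\cosh w\cos\theta-\partial_x w$; but here one should instead use the combination of (\ref{Back1intro})--(\ref{Back2intro}) that eliminates $\partial_x\theta$ and $\partial_x w$ in favor of an ODE in $y$ alone. Concretely, differentiating the ansatz shows $\partial_x w$ and $\partial_y w$ are proportional with ratio determined by $F,G$, and one checks that the right combination gives $\partial_y\theta$ as a function of $\theta$ and $\sinh w(x,y)$ times $\sqrt{b^2-1}$. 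Integrating in $y$ introduces exactly $Y(x,y)=\int_0^y\sinh w(x,s)\,ds$ and hence the term $J_2=\sqrt{b^2-1}\,Y$, with the constant of integration at $y=0$ matching the value $\theta(x,0)$ found in the previous step. Undoing the tangent half-angle substitution then gives the claimed closed form
\[
\tan\frac{\theta(x,y)}{2}=\frac1{b(x)}\Bigl(\sqrt{b^2(x)-1}\tan\bigl(J_1(x)+J_2(x,y)\bigr)-1\Bigr).
\]

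I expect the main obstacle to be the bookkeeping that shows the two ODEs (the one in $x$ on the axis, and the one in $y$) are compatible and genuinely produce the \emph{same} additive phase structure $J_1+J_2$ inside a single $\tan$ — i.e. that the integration constant from the $y$-integration is precisely $J_1(x)$, with no extra $x$-dependent correction. This is exactly the content of the integrability condition $\partial^2_{xy}\theta=\partial^2_{yx}\theta$, which in the $(BT)$ setting is automatic, but making it manifest requires care with the $x$-dependence of $b(x)$ and of $\sqrt{b^2(x)-1}$; in particular one must verify that $\partial_x\theta$ computed from the closed formula agrees with $-2\cosh w\cos\theta-\partial_y w$ from (\ref{Back2intro}) — a somewhat delicate but purely computational check. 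A secondary point worth isolating as a remark is the sign/branch of $\sqrt{b^2-1}$: the formula is stated for a fixed choice, and one should note that $b^2(x)-1$ may be negative, in which case $\tan$ and $\sqrt{\cdot}$ are read as their complex (or hyperbolic) counterparts, consistently with the appearance of $k\in\mathbb C$ in Theorem \ref{thetaxthm}.
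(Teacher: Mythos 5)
Your overall strategy --- pin down $\theta$ on the axis $y=0$, then integrate an ODE in $y$ whose integration introduces $Y(x,y)$ --- is the same as the paper's, but the way you distribute the two B{\"a}cklund equations between these steps contains genuine errors, not just notational slips. On $y=0$ the condition $\partial_y w(x,0)=0$ simplifies (\ref{Back2intro}), not (\ref{Back1intro}): equation (\ref{Back1intro}) still contains the unknown $\partial_y\theta$ and does not become an ODE in $x$ along the axis. The simplified equation is $\partial_x\theta(x,0)=-2\cosh w(x,0)\cos\theta(x,0)$, and with $\theta(0,0)=\tfrac{\pi}{2}$ its (unique) solution is the constant $\theta(x,0)\equiv\tfrac{\pi}{2}$; no separable $x$-integration takes place and no $x$-dependent phase is produced at this stage, so your claim that solving an axis ODE ``produces the $\arctan$ term $J_1(x)$'' misplaces where $J_1$ comes from. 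Likewise, the off-axis step needs no ``combination eliminating $\partial_x\theta$ and $\partial_x w$'': since $w=2\arctanh(F(x)G(y))$ gives $\partial_x w=2b(x)\sinh w$ everywhere, equation (\ref{Back1intro}) alone yields $\partial_y\theta=2\sinh w\,(\sin\theta+b(x))$ (note there is no $\sqrt{b^2-1}$ factor in the ODE itself), and integrating in $y$ for fixed $x$ via $t=\tan\frac{\theta}{2}$ gives $\arctan\frac{b\tan(\theta/2)+1}{\sqrt{b^2-1}}\Big|_{\theta(x,0)}^{\theta(x,y)}=\sqrt{b^2(x)-1}\,Y(x,y)$. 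The term $J_1(x)=\arctan\frac{b+1}{\sqrt{b^2-1}}$ is precisely the value of this antiderivative at the axis value $\theta(x,0)=\tfrac{\pi}{2}$, i.e.\ the constant of integration of the $y$-ODE, not the output of an $x$-integration. You also misquote (\ref{Back2intro}) as $\partial_y\theta=-2\cosh w\cos\theta-\partial_x w$; that equation couples $\partial_x\theta$ with $\partial_y w$.

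The ``main obstacle'' you isolate --- verifying that the closed formula is compatible with (\ref{Back2intro}) via $\partial^2_{xy}\theta=\partial^2_{yx}\theta$ --- is vacuous under the hypotheses: $(w,\theta)\in (BT_0)$ means the pair is assumed to satisfy both (\ref{Back1intro}) and (\ref{Back2intro}), and the lemma merely extracts a formula for this given $\theta$, using each equation where convenient, so no cross-derivative check is required. Once the two equations are assigned correctly and the key simplification $\theta(x,0)\equiv\tfrac{\pi}{2}$ is identified (this is the only role of the initial condition $\theta(0,0)=\tfrac{\pi}{2}$), your outline reduces to the paper's proof; your closing remark on the branch of $\sqrt{b^2-1}$ is a reasonable secondary observation.
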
 
	
	\begin{proof}
		By assumption, we have 
		\begin{equation}\label{wy0}
			\partial_y w(x,0)=0.
		\end{equation}
		Taking (\ref{Back2intro}) for $y=0$ and applying (\ref{wy0}), gives
		\begin{equation*}
			\partial_x\theta(x,0)=-2\cosh w(x,0)\cos\theta(x,0).
		\end{equation*}
		Given the initial condition $\theta(0,0)=\frac{\pi}{2}$, we can take
		\begin{equation*}\label{tanthetaX0}
			\theta(x,0)=\frac{\pi}{2}.
		\end{equation*}
		Next, observe that since $w(x,y)=2\arctanh({F(x)}{G(y)})$, we have
		\[
		\partial_x w(x,y)=\frac{F^{\prime}(x)}{F(x)}\sinh w(x,y),
		\]	
		hence (\ref{Back1intro}) implies
		\begin{equation}\label{thetay}
			\partial_y\theta(x,y)=2\sinh w(x,y)\left(\sin\theta(x,y)+\frac{F^{\prime}(x)}{2F(x)}\right).
		\end{equation}
		An integration yields
		\begin{equation*}
			\tan\frac{\theta(x,y)}{2}=\frac{1}{b(x)}\left( \sqrt{b^2(x)-1}\tan({J_1+J_2}) -1 \right),
		\end{equation*}
		where
		\begin{align*}
			J_1=J_1(x)&=\arctan \left( \frac{1+b(x)}{\sqrt{b^2(x)-1}}  \right),
		\end{align*}
		and
		\begin{align*}
			J_2&=J_2(x,y)=\sqrt{b^2(x)-1}\;Y(x,y).
		\end{align*}
	\end{proof}
	Note that the only, but crucial use of the condition (\ref{omega1}) is that $\frac{\partial_{x}w(x,y)}{2\sinh{w(x,y)}}=\frac{F'(x)}{2F(x)}=b(x)$ is a function of $x$, which facilitates the integration in (\ref{thetay}).
	
	It remains  to find explicit formulas of $X$ and $Y$ in terms of $x$ and $y$. Recall that by (\ref{Xvar}) and (\ref{Yvar}), these expressions are given by
	\[
	X=\int_{0}^{x}\frac{1+F^{2}(t)G^2(0)}{1-F^{2}(t)G^2(0)}dt, 
	\]
	and
	\[
	Y=2F(x)\int_{0}^{y} \frac{G(s)}{1-F^2(x) G^2(s)}ds, 
	\]
	where $F$ and $G$ are determined by (\ref{odef}) and (\ref{odeg}), respectively. Therefore, $F$ and $G$ are elliptic functions in general, making the integrals in the computation of $X$ and $Y$ nontrivial. In order to compute these integrals,  we make use of (\ref{odef}) and (\ref{odeg}) respectively.
	
	Write
	\begin{align*}
		X&=\int_{0}^{x}\frac{1+F^{2}(t)G^2(0)}{1-F^{2}(t)G^2(0)}dt\\
		&=\int_{0}^{x}\frac{\left(1+F^{2}(t)G^2(0)\right) F^{\prime}(t)}{\left(1-F^{2}(t)G^2(0)\right)\sqrt{A F^{4}(t)+B F^2(t)+C}}dt.
	\end{align*}
	Hence,
	\begin{equation*}
		X=\int_{F(0)}^{F(x)}\frac{1+v^{2}G^2(0)}{\left(1-v^{2}G^2(0)\right)\sqrt{A v^{4}+B v^2+C}}dv.
	\end{equation*}
	This integral can be computed explicitly and it involves first and third kind elliptic functions.
	
	Similarly, we write
	\[
	Y=F(x)\int_{0}^{y} \frac{2G(s) G^{\prime}(s)}{\left(1-F^2(x) G^2(s)\right)\sqrt{-CG^4(s)-(B-4)G^2(s)-A }}ds.
	\]
	Hence,
	\begin{equation*}
		Y=F(x)\int_{G^2(0)}^{G^2(y)}\frac{du}{\left(1-F^2(x) u\right)\sqrt{-Cu^2-(B-4)u-A }}.
	\end{equation*}
	This integral can be computed explicitly and it is an elementary function of $G^2(y)$. 
	
	\subsection{The corresponding harmonic maps.}
	Given Lemma \ref{BackLemma}  we  now complete the proof of our main result.

	\begin{proof}[End of the proof of Theorem \ref{new class}]
		Note that from Lemma \ref{BackLemma} we know the formula of $\theta$. Thus, it is left to express the harmonic map in terms of $\theta$ and $b=\frac{\partial_{x}w(x,y)}{2\sinh{w(x,y)}}=b(x)$. The proof relies on the formulas given by Proposition \ref{Theorem2}. The trick of the proof is to use (\ref{thetay}) in order to substitute\begin{equation*}
			\sinh{w(x,s)}=\frac{\partial_{s}\theta(x,s)}{2 (\sin{\theta(x,s)}+b(x))},
		\end{equation*} 
		in the integrals $I_2$ and $I_4$. For $I_1$ and $I_3$, we use the initial condition $\theta(x,0)=\frac{\pi}{2}$. A direct calculation now yields
		\begin{align*}I_1&=X, \\
			I_2&=\frac{1}{2}\log{\frac{\sin{\theta(x,y)}+b}{1+b}},\\
			I_3&=0,\\
			I_4&=-\frac{1}{2}\frac{e^{2X}\cos{\theta(x,y)}}{1+b}.
		\end{align*}
		Then the result follows by Prosopition \ref{Theorem2}  and a straightforward computation of $R$ and $S$.
	\end{proof}

	We elaborate on this result by an example motivated by Proposition \ref{Kenmotsu}. We consider the case $\alpha=1, \beta=2$, $\tanh{\frac{w(0,0)}{2}=\frac{\sqrt{2}}{2}}$ in Proposition \ref{Kenmotsu}, which implies that both $f, g$ are elementary functions. Next, using Theorem \ref{new class} we provide an entirely new harmonic map to the hyperbolic plane.
	
	\begin{example}
		Consider $S(0,0)=1$, $R(0,0)=0$ and $\tanh{\frac{w(x,y)}{2}}=\frac{\sqrt{2}}{2}\frac{\cosh{(2\sqrt{2}y)}}{\cos{(2x)}}$.
		We find 
		\[
		X=x+\arctanh{(\tan{(2x)})},\quad  b=b(x)=\tan{(2x)},
		\]
		\[
		Y= {\frac{1}{\sqrt{1-\tan^2(2x)}} }  \arctanh{\frac{\sinh{(2\sqrt{2}y)}}{\sqrt{1-2\sin^2{(2x)}}}},
		\]
		and
		\[
		\tan{\frac{\theta(x,y)}{2}}=\frac{\sinh{(2\sqrt{2}y)}+\cos{(2x)}-\sin{(2x)}}{\cos{(2x)}-\sin{(2x)}-\sinh{(2\sqrt{2}y)}}.
		\]
		After a lengthy computation we obtain
		\[{
			S(x,y)=e^{2x}\frac{1+2 \cos{(4x)}-\cosh{(4\sqrt{2}y)}}{1+\cosh{(4\sqrt{2}y})-2\sin{(4x})}}
		\]
		and
		\[{
			R(x,y)=-4e^{2x}\frac{\cos{(2x)}\sinh{(2\sqrt{2}y)}}{1+\cosh{(4\sqrt{2}y})-2\sin{(4x})}.}
		\]
		Therefore, $u=R+iS$ is the harmonic map that corresponds to $w$.
		
		The domain of definition of $u$ is the set
			\[
			\Omega=\left\{ (x,y): \; \left|\frac{\sqrt{2}}{2}\frac{\cosh{(2\sqrt{2}y)}}{\cos{(2x)}}\right|<1 \right\}
			\] where the map is a well defined $C^{2}$ map whose Jacobian is almost everywhere non vanishing. 
	\end{example}

	\section{One-soliton solutions of the Sine-Gordon equation}\label{app}
	In the previous section, we started with a special solution $w$ to the sinh-Gordon equation and then determined $\theta$ by the B{\"a}cklund transformation, in order to construct harmonic maps by the algorithm proposed in Proposition \ref{Theorem2}. We now follow the reverse direction, and start with a special solution $\theta$ to the sine-Gordon equation. 
	
	More precisely, we are motivated by an example in \cite[p.18]{FotDask}, where a special solution $\theta=\theta(x)=\arcsin \tanh (2x)$ of the sine-Gordon equation is considered. Notice that in the aforementioned example, $\theta(x)$ is an elementary function. However, we prove in this section that in general $\theta(x)$ is an elliptic function. In addition, we prove that if $\theta=\theta(x)$ is a solution to the sine-Gordon equation (\ref{sin-Gordon eqn}), then the function $w$ determined by the B{\"a}cklund transformation is in the family of solutions that we have studied in Section \ref{sec:The sinh-Gordon equation},  i.e. $w=\arctanh (F(x)G(y))$. Next, we provide an  explicit formula for the corresponding harmonic map. 
	
	For clarity reasons, we first present the definitions and properties of the elliptic Jacobi functions. The formulation used in this paper is taken from \cite{MilThom64}.

	The elliptic integral of the first kind $F(\phi \vert n)$  and the Jacobi elliptic function $\sn(v\vert n)$  are defined by the formula
	\begin{equation*}\label{eq:Jacobi_ampli}
		F(\phi\vert n) = v=
		\int_0^x \, \dfrac{dt}{   \sqrt{ \left( 1-t^2\right)\left( 1 -n t^2\right)}}=
		\sn^{-1}(x\vert n),
	\end{equation*}
	where
	\[
	\sn(v \vert n)=x=\sin \phi.
	\]
	{Define}
	\[\cn(v \vert n)=\cos \phi ,
	\quad \dn(v \vert n)= \sqrt{1-n  \sin^2\, \phi}.
	\]
	
	We first prove the following result, which provides the characterization for $w$.
	
	\begin{proposition}
		If $\theta = \theta(x)$ is a solution of the sine-Gordon equation
		\begin{equation*}
			\Delta\theta = - 2\sin(2\theta),
		\end{equation*}
		then the associated solution $w$ of the sinh-Gordon equation is 
		\begin{equation*}
			\tanh \frac{w(x,y)}{2} = F(x)G(y) = 
			\frac{\sqrt{ab}\tan(\frac{\sqrt{ab}}{2}y + k)}{\theta'(x) - 2\cos\theta(x)},
		\end{equation*}
		where $a = 2\cos\theta(0) + \theta'(0)$, $b = 2\cos\theta(0) - \theta'(0)$, and $k\in \mathbb{C}$ is such that 
		$\tanh \frac{w(0,0)}{2}=- {\sqrt{\frac{a}{b}}\tan(k)}$.
	\end{proposition}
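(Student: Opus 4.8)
The plan is to exploit the B\"acklund transformation directly, treating it as a system for the unknown $w$ once $\theta=\theta(x)$ is fixed, and to integrate it explicitly. Since $\theta$ depends only on $x$, the sine-Gordon equation $\Delta\theta=-2\sin(2\theta)$ reduces to the ODE $\theta''(x)=-2\sin(2\theta(x))$, which admits the first integral $(\theta'(x))^2 = -4\cos^2\theta(x) + \text{const}$; evaluating at $x=0$ and using the definitions $a=2\cos\theta(0)+\theta'(0)$, $b=2\cos\theta(0)-\theta'(0)$ identifies the constant, so that $(\theta')^2 = ab - 4\cos^2\theta + (2\cos\theta(0))^2 - \ldots$; more usefully, $(\theta' - 2\cos\theta)(\theta' + 2\cos\theta)$ will be pinned down along solutions. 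This is the algebraic backbone that makes the quantities $2\cos\theta(x)\pm\theta'(x)$ behave like $a$ and $b$ up to controlled factors.

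Next I would substitute the separable ansatz into the B\"acklund equations (\ref{Back1intro})--(\ref{Back2intro}). Writing $\tanh\frac{w}{2}=F(x)G(y)$, one has $\sinh w = \frac{2FG}{1-F^2G^2}$ and $\cosh w = \frac{1+F^2G^2}{1-F^2G^2}$, and $\partial_x w = \frac{F'}{F}\sinh w$, $\partial_y w = \frac{G'}{G}\sinh w$. Plugging these into (\ref{Back1intro}) and (\ref{Back2intro}) with $\partial_x\theta = \theta'(x)$, $\partial_y\theta = 0$ should separate: the $y$-dependence must reorganize into an ODE for $G$ of the form $(G'/G)$-plus-trigonometric-in-$y$, forcing $G(y)$ to be a tangent, $G(y) = c\tan(\frac{\sqrt{ab}}{2}y + k)$ for appropriate constant $c$ and integration constant $k$; meanwhile the $x$-part yields $F(x) = \frac{\sqrt{ab}\,c^{-1}}{\theta'(x) - 2\cos\theta(x)}$ (the denominator being exactly the combination that appears, up to sign, because of the first integral of the $\theta$-ODE). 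One then checks that both B\"acklund equations are simultaneously satisfied — this is where the first integral $(\theta')^2 + 4\cos^2\theta = \text{const}$ is consumed, converting what looks like an overdetermined system into an identity. Finally, the constant $k$ is fixed by evaluating at $(0,0)$: $\tanh\frac{w(0,0)}{2} = F(0)G(0) = \frac{\sqrt{ab}\tan k}{\theta'(0) - 2\cos\theta(0)} = \frac{\sqrt{ab}\tan k}{-a}$, which rearranges to $\tanh\frac{w(0,0)}{2} = -\sqrt{a/b}\tan k$ since $\sqrt{ab}/a = \sqrt{b/a}$ — wait, one must track the sign and the $a$ vs $b$ carefully here, matching $\frac{\sqrt{ab}}{-a} = -\sqrt{b/a}$ against the claimed $-\sqrt{a/b}$, so the normalization constant $c$ absorbed into $F,G$ must be chosen to swap these, i.e. $c$ is picked so that the stated form holds. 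It should also be verified that the resulting $w$ indeed solves the sinh-Gordon equation — but this is automatic from \cite{FotDask}, since $(w,\theta)\in(BT)$ and $\theta$ solves sine-Gordon.

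The main obstacle I anticipate is the separation-of-variables bookkeeping in the middle step: the two B\"acklund equations each mix $F$, $G$, $\theta$, and $\theta'$ nonlinearly through $\sinh w$ and $\cosh w$, and it is not obvious a priori that the system is consistent. The key insight that resolves this is recognizing that $2\cos\theta(x) - \theta'(x)$ and $2\cos\theta(x) + \theta'(x)$ multiply to a constant ($= ab$) along any one-variable solution of sine-Gordon — so $F(x)$ and $1/F(x)$ (up to the constant $\sqrt{ab}$) are the two combinations $\theta'\mp 2\cos\theta$, and this reciprocal structure is exactly what the B\"acklund system demands. Once that is seen, both equations collapse to the single ODE $\theta'' = -2\sin 2\theta$ plus its first integral, and everything closes. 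The remaining work — determining $G(y)$ as a tangent and fixing $k$ — is a routine separable ODE integration.
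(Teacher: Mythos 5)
Your proposal is correct in substance, and it takes a somewhat different and in fact leaner route than the paper. The paper first represents $\theta$ through Jacobi elliptic functions ($\sin\theta=\sn$, $\cos\theta=\cn$, $\theta'=c\,\dn$), integrates (\ref{Back1intro}) to get $\tanh\frac{w(x,y)}{2}=\tanh\frac{w(0,y)}{2}e^{-2I(x)}$ with $I(x)=\int_0^x\sin\theta$, computes $I(x)$ explicitly in elliptic terms to identify $F=e^{-2I}$ as $\frac{\theta'+2\cos\theta}{a}$, then invokes Proposition \ref{w thm} to obtain the ODE $(G')^2=\frac14(bG^2+a)^2$ for $G$, chooses the tangent solution and finally verifies (\ref{Back2intro}). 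You instead substitute the product form into both B{\"a}cklund equations directly: (\ref{Back1intro}) gives $F'/F=-2\sin\theta$, which together with $\theta''=-2\sin 2\theta$ shows $F\propto 1/(\theta'-2\cos\theta)$, and (\ref{Back2intro}) then collapses, via the constancy of $(2\cos\theta-\theta')(2\cos\theta+\theta')=4\cos^2\theta-(\theta')^2=ab$, to the $y$-only Riccati equation $2\lambda G'=ab+\lambda^2G^2$ whose solution is the tangent. This bypasses both the elliptic-function computations and the appeal to Proposition \ref{w thm}, at the price of not exhibiting the explicit $\sn,\cn,\dn$ formulas that the paper reuses later in the proof of Theorem \ref{thetaxthm}; your consistency check of the ``overdetermined'' system is exactly the paper's final verification of (\ref{Back2intro}), just organized differently.

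Two small repairs. First, you call the product form an ansatz; for the proposition as stated you should either note that it is forced by (\ref{Back1intro}) (since $\partial_x\log\tanh\frac{w}{2}=\frac{\partial_x w}{\sinh w}=-2\sin\theta(x)$ depends only on $x$, so $\tanh\frac{w}{2}=(\text{function of }y)\,e^{-2I(x)}$, which is what the paper does), or appeal to uniqueness of the solution of the B{\"a}cklund system with prescribed $w(0,0)$ so that verifying the formula suffices. Second, your sign bookkeeping has slips that cancel your own worry at the end: the first integral is $(\theta')^2+4\sin^2\theta=c^2$, equivalently $(\theta')^2-4\cos^2\theta=c^2-4$ (not $(\theta')^2+4\cos^2\theta=\mathrm{const}$), and at the origin $\theta'(0)-2\cos\theta(0)=-b$, not $-a$; hence $\tanh\frac{w(0,0)}{2}=\frac{\sqrt{ab}\tan k}{-b}=-\sqrt{a/b}\tan k$ matches the stated normalization exactly, with no need to adjust the splitting constant between $F$ and $G$.
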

	
	\noindent	\textbf{Remark.} The above formula holds true also for the case $a b <0.$

	\begin{proof}
		Our first task is to determine the function $\theta$. Consider $\theta= \theta(x)$ to be a solution of the sine-Gordon equation
		$$
		\Delta\theta = -2\sin(2\theta).
		$$
		Then,
		$$
		\theta'' = -2\sin(2\theta) 
		$$
		and therefore 
		$$
		\theta'(x)^{2} = - 4\sin^{2}\theta(x) + c^{2},
		$$
		where $c^2 = \theta'(0)^{2} + 4\sin^2\theta(0)$.
		A simple calculation reveals that
		$$
		\int_{0}^{\theta(x)}\frac{d\psi}{\sqrt{1 - \frac{4}{c^{2}}\sin^2(\psi)}} = c_{1} + c x,
		$$
		which implies
		\begin{align}\label{sn-theta}
			\sin\theta(x) = \sn(c x + c_{1}| \frac{4}{c^{2}}), \; \text{ where } c_{1} = \int_{0}^{\theta (0)}\frac{d\psi}{\sqrt{1 - \frac{4}{c^{2}}\sin^2(\psi)}}.
		\end{align}
		Moreover, we have 
		\begin{align}\label{cndn-theta}
			\cos\theta(x) = \cn(c x + c_{1}| \frac{4}{c^{2}}),\quad \theta'(x)=c\; \dn(c x + c_{1}| \frac{4}{c^{2}}), 
		\end{align}
		from which we derive the following differential equation 
		\begin{equation}\label{thetaODE}
			\theta'^2-4\cos^2 \theta=c^2-4.
		\end{equation}
		Note that this implies
		\begin{equation*}
			ab=4-c^2.
		\end{equation*}
		
		We next prove that the function $w$ is of the form $w=2\arctanh (F(x)G(y))$. For that, we turn to the B{\"a}cklund transformation, which rewrites as 
		\begin{align}
			\partial_x w&=-2\sinh w \sin \theta, \label{Back1Sec5} \\
			\partial_y w+\theta'(x)&=-2\cosh w \cos \theta \label{Back2Sec5}.
		\end{align}
		Integrating \eqref{Back1Sec5}, we get
		$$
		\int_{w(0,y)}^{w(x,y)} \frac{d\phi}{\sinh(\phi)} + 2\int_{0}^{x}\sin\theta(s)ds = 0.
		$$
		It follows that
		$$
		\tanh(\frac{w(x,y)}{2}) = \tanh(\frac{w(0,y)}{2})e^{-2I(x)},
		$$
		where $I(x) = \int_{0}^{x}\sin\theta(s)ds$.
		
		We now apply Theorem \ref{w thm} to find an explicit formula for $G(y)$.
		Set 
		\begin{equation*} F(x) = \exp(-2I(x)) \; \text{and} \;G(y) = \tanh(\frac{w(0,y)}{2}).
		\end{equation*}
		By Theorem \ref{w thm}, the function $F$ satisfies
		\begin{equation}\label{FODE}
			\frac{(F'(x))^{2}}{F^2(x)} = AF^2(x) + B+ \frac{C}{F^{2}(x)},
		\end{equation}
		for some constants $A, B, C$. We next determine these constants.
		
		Observe first that $I(x)$ can be computed explicitly:
		\begin{align}
			I(x)&=\int_0^x\sin \theta(s)ds=\int_0^x \sn(cs+c_1|\frac{4}{c^2})ds \label{I(x)def} \\
			&=\frac{1}{2}\log\frac{c\;\dn(c_1|\frac{4}{c^2})+2\cn(c_1|\frac{4}{c^2})}{c\;\dn(cs+c_1|\frac{4}{c^2})+2\cn(cs+c_1|\frac{4}{c^2})}. \notag
		\end{align}
		This implies that
		\begin{align}
			F(x)&=e^{-2I(x)}=\frac{c\;\dn(cs+c_1|\frac{4}{c^2})+2\cn(cs+c_1|\frac{4}{c^2})}{a} \label{Felliptic1},\\
			\frac{1}{F(x)}&=e^{2I(x)}=-\frac{c\;\dn(cs+c_1|\frac{4}{c^2})-2\cn(cs+c_1|\frac{4}{c^2})}{b}\label{Felliptic2}.
		\end{align}
		Differentiating $F(x)=e^{-2I(x)}$ and using \eqref{sn-theta}, we obtain $F'(x)/F(x)=-2\sn(cs+c_1|\frac{4}{c^2})$. Plugging this quotient into the left hand side of \eqref{FODE} and using the elliptic expressions (\ref{Felliptic1}) and (\ref{Felliptic2}) for the right hand side, we obtain
		$A = - \frac{a^{2}}{4}$, $B = \frac{c^{2} + 4}{2}=\frac{8-ab}{2}$ and
		$C = - \frac{b^{2}}{4}$. 
		
		Finally, by Theorem \ref{w thm}, the function $G(y)$ satisfies the equation
		\begin{equation}\label{GODE}
			(G'(y))^{2} = \frac{b^{2}}{4}G^{4}(y) + \frac{ab}{2}G^{2}(y) +
			\frac{a^{2}}{4}= \frac{1}{4}(bG^{2}(y) + a )^{2}.
		\end{equation}
		Choose
		\begin{equation}\label{Gwithk}
			G(y) = -\sqrt{\frac{a}{b}}
			\tan(\frac{\sqrt{ab}}{2}y +k), 
		\end{equation}
		where $\tanh \frac{w(0,0)}{2}=- {\sqrt{\frac{a}{b}}\tan(k)}$. Then, one can verify that $w(x,y)$ satisfies the second equation (\ref{Back2Sec5}) of the B{\"a}cklund transformation, so we have found the  associated $w(x,y)$ to $\theta(x)$.
	\end{proof}
	
	Having calculated $\theta(x)$ and $w(x,y)$, we are now ready to compute the corresponding harmonic map by calculating the integrals $I_{1}$, $I_{2}$, $I_{3}$ and $I_{4}$ in Proposition \ref{Theorem2}.
	
	\begin{proof}[Proof of Theorem \ref{thetaxthm}]
		First, observe that (\ref{cndn-theta}) with (\ref{Felliptic1}) and (\ref{Felliptic2}) imply that
		\begin{align}
			F(x)&=\frac{\theta'(x)+2\cos\theta(x)}{a}, \label{Ftheta}\\
			\frac{1}{F(x)}&=-\frac{\theta'(x)-2\cos\theta(x)}{b}.\label{Finvtheta}
		\end{align}
		
		Let us start from $I_1$. Since $I'(x)=\left( \int_{0}^{x}\sin\theta(s)ds \right)'=\sin \theta(x)$ and $\tanh\frac{w(x,y)}{2}=e^{-2I(x)}G(y)$, applying an elementary trigonometric equality we obtain
		\begin{align*}
			I_{1}(x) &= \int_{0}^{x}\cosh w(t,0)\sin\theta(t)dt = 
			\int_{0}^{x} \frac{\exp(4I(t)) + G^{2}(0)}{\exp(4I(t)) - G^{2}(0)}dI(t)\\
			&=\frac{1}{2} \int_{1}^{\exp(2I(x))} \left(\frac{2u}{u^{2} - G^{2}(0)} - \frac{1}{u}\right)du, 
		\end{align*}
		or
		\begin{equation}\label{exp(2I1)}
			\exp(2I_{1}(x)) = \frac{1 - G^{2}(0)F^{2}(x)}{F(x)(1 - G^{2}(0))}=\frac{\frac{1}{F(x)} - G^{2}(0)F(x)}{1 - G^{2}(0)}.
		\end{equation}
		Using (\ref{Finvtheta}), (\ref{Ftheta}) and (\ref{Gwithk}), we conclude
		\begin{equation}\label{exp2I1theta}
			\exp(2I_{1}(x)) = 
			\frac{2\cos\theta(x)\cos(2k) - \theta'(x)}{2\cos\theta(0)\cos(2k) - \theta'(0)}.
		\end{equation}
		
		Next, we compute $I_2$. Once again, using that $\tanh\frac{w(x,y)}{2}=F(x)G(y)$ as well as an elementary trigonometric equality, we obtain
		\begin{align*}
			I_{2}(x,y) &=\cos\theta(x) \int_{0}^{y}\sinh(w(x,s))ds\\ 
			&=2\cos\theta(x)F(x) \int_{0}^{y} \frac{
				G(s)}{1 - F^{2}(x)G^{2}(s)}ds.
		\end{align*}
		An application of (\ref{GODE}) and a change of variables, yield
		\begin{align*}
			I_{2}(x,y) &= 4\cos\theta(x)F(x) \int_{0}^{y} \frac{
				G(s)}{1 - F^{2}(x)G^{2}(s)} \; \frac{G'(s)}{bG^{2}(s) + a}ds\\
			&=\frac{ 2\cos\theta(x)F(x)}{aF^{2}(x) + b}
			\log(\frac{1 - F^{2}(x)G^{2}(0)}{1 - F^{2}(x)G^{2}(y)}
			\;\frac{bG^{2}(y) + a}{bG^{2}(0) + a}
			)\\
			&=\frac{1}{2}\log(\frac
			{1 - F^{2}(x)G^{2}(y)}
			{1 - F^{2}(x)G^{2}(0)}
			\frac{bG^{2}(0) + a}
			{bG^{2}(y) + a}
			), 
		\end{align*}
		where in the last step we used \eqref{Ftheta}, \eqref{thetaODE} and the fact that $ab=4-c^2$. A manipulation similar to the one used to derive \eqref{exp2I1theta} therefore gives
		\begin{equation}\label{expI2}
			\exp(2I_{2}(x,y)) = \frac
			{2\cos\theta(x)\cos(\sqrt{ab}y + 2k) - \theta'(x)}
			{2\cos\theta(x)\cos(2k) - \theta'(x)}.
		\end{equation}
		Then, by Proposition \ref{Theorem2}, we have
		\begin{align*}
			S(x,y) &= S(0,0)\exp(2I_{1}(x))\exp(2I_{2}(x,y))\\
			&=S(0,0)\frac{2\cos\theta(x)\cos(\sqrt{ab}y + 2k) - \theta'(x)}
			{2\cos\theta(0)\cos(2k) - \theta'(x)}.
		\end{align*}

		We proceed to  $I_3$. Due to (\ref{I(x)def}), it can be written as
		\begin{align*}
			I_{3}(x) &= \int_{0}^{x} \exp(2I_{1}(t))\cosh(w(t,0))\cos\theta(t)dt\\
			&=\int_{0}^{x} \exp(2I_{1}(t))\cosh(w(t,0))\cot\theta(t)dI(t). 
		\end{align*}
		
		On the one hand, since $F(x) = e^{-2I(x)}$ satisfies (\ref{FODE}) with $A = - \frac{a^{2}}{4}$, $B = \frac{8-ab}{2}$ and
		$C = - \frac{b^{2}}{4}$, 
		we get
		$$
		\sin\theta(x) = I'(x) = -\frac{1}{2}\frac{F'(x)}{F(x)}=\frac
		{\sqrt
			{2(8-ab)e^{4I(x)} - a^{2} - b^{2}e^{8I(x)}}
		}
		{4e^{2I(x)}},
		$$
		therefore
		$$
		\cos\theta(x) = \sqrt{1 - I'(x)^{2}} = \frac{be^{4I(x)} + a}{4e^{2I(x)}}. 
		$$
		On the other hand, replacing $e^{2I_1(x)}$ by (\ref{exp(2I1)}), we can write \begin{align*}
			I_{3}(x) &= \int_{0}^{x} \frac{\exp(4I(t)) + G^{2}(0)}{(1 - G^{2}(0))\exp(2I(t))}\frac{be^{4I(x)} + a}
			{\sqrt
				{2(8-ab)e^{4I(x)} - a^{2} - b^{2}e^{8I(x)}}
			}dI(t)\\
			&=\frac{1}{2(1 - G^{2}(0))}
			\int_{1}^{\frac{2\cos\theta(x) - \theta'(x)}{b}}
			\frac{u^{2} + G^{2}(0)}{u^{2}}
			\frac{bu^{2} + a}
			{\sqrt{2(8-ab)u^{2} - a^{2} - b^{2}u^{4}}}
			du, 
		\end{align*}
		where we used the change of variables $u=e^{2I(x)}=1/F(x)$ and (\ref{Finvtheta}).
		Thus, taking into account (\ref{Gwithk}), we find
		\[ I_3(x)=\frac{1}{2}\cosh^2\frac{w(0,0)}{2}\;J\left(\frac{2\cos\theta(x) - \theta'(x)}{b}\right), 
		\]
		where $J(t)=\int_{1}^{t} \frac{u^{2} + G^{2}(0)}{u^{2}}
		\frac{bu^{2} + a}
		{\sqrt{2(8-ab)u^{2} - a^{2} - b^{2}u^{4}}}
		du.$ Note that $J$ can be computed by the use of elliptic integrals.
		
		Finally, for the last integral $I_4$ we have
		\begin{align*}
			I_{4}(x,y) &= e^{2I_{1}(x)}\sin\theta(x)\int_{0}^{y}\exp(2I_{2}(x,s))\sinh(w(x,s))ds\\
			&=\frac{1}{2}e^{2I_{1}(x)}\tan\theta(x)\int_{0}^{y}
			e^{(2\cos\theta(x)\int_{0}^{s}\sinh(w(x,\psi))d\psi)}2\cos\theta(x)\sinh(w(x,s))ds\\
			&= \frac{1}{2}e^{2I_{1}(x)}\tan\theta(x)(e^{2I_{2}(x,y)} - e^{2I_{2}(x,0)}) \\
			&=  \frac{1}{2}e^{2I_{1}(x)}\tan\theta(x)(e^{2I_{2}(x,y)} - 1)\\
			&=\frac{\sin\theta(x)(\cos(\sqrt{ab}y + 2k) - \cos(2k))}
			{2\cos\theta(0)\cos(2k) - \theta'(0)}, 
		\end{align*}
		where in the last step we used (\ref{exp2I1theta}) and (\ref{expI2}). The calculation of $R(x,y) = R(0,0) + 2S(0,0)(I_{3}(x) - I_{4}(x,y))$ is now complete.
	\end{proof}
	Next, using Theorem \ref{thetaxthm}, we provide an entirely new harmonic map to the hyperbolic plane.
	
	\begin{example}
		Consider the case when $\theta(0) = 0$, $\theta'(0) = 1$, $G(0) = 0=\tanh\frac{w(0,0)}{2}$, $S(0,0) = 1$ and $R(0,0) = 0$. Then, we compute
		\[\sin\theta(x) = \sn(x|4) = \frac{1}{2} \sn (2x|\frac{1}{4}),\]
		\[\tanh(\frac{w(x,y)}{2}) = \frac{\sqrt{3}\tan(\frac{\sqrt{3}}{2}y)}{\theta'(x) - 2\cos\theta(x)},\]
		and
		$$
		I(x) = \frac{1}{2}\log(2\cos\theta(x) - \theta'(x)).
		$$
		Therefore, 
		\begin{align*}
			I_{1}(x) &= I(x),\\
			I_{2}(x, y) &= \frac{1}{2}\log(\frac{2\cos\theta(x)\cos(\sqrt{3}y) - \theta'(x)}
			{2\cos\theta(x) - \theta'(x)}),\\
			I_{3}(x) &= \frac{1}{2}\int_{1}^{2\cos\theta(x) - \theta'(x)} \frac{u^{2} + 3}{\sqrt{10u^{2} - u^{4} - 9}}du,\\
			I_{4}(x,y) &= \sin\theta(x)(\cos(\sqrt{3}y) - 1).
		\end{align*}
		Then, the corresponding harmonic map then is $u(x,y) = R(x,y) + iS(x,y)$, where 
		$$
		R(x,y) = J(2\cos\theta(x)-\theta'(x))
		- 2\sin\theta(x)(\cos(\sqrt{3}y) - 1)
		$$
		and
		$$
		S(x,y) = 2\cos\theta(x)\cos(\sqrt{3}y) - \theta'(x),
		$$
		where $J(t)=\int_{1}^{t} \frac{u^{2} + G^{2}(0)}{u^{2}}
			\frac{bu^{2} + a}
			{\sqrt{2(8-ab)u^{2} - a^{2} - b^{2}u^{4}}}
			du.$
		
		The domain of definition of $u$ is the set of \[\left\{ (x,y):  \; \left|\frac{\sqrt{3}\tan(\frac{\sqrt{3}}{2}y)}{\theta'(x) - 2\cos\theta(x)}\right|<1\right\}\] where the map is a well defined $C^{2}$ map whose Jacobian is almost everywhere non vanishing. 
	\end{example}
	
	\begin{acknowledgement}
			The authors would like to sincerely thank the referees for their valuable comments, which helped improve the initial manuscript.
		\end{acknowledgement}
	
	\section*{Funding}
	E. Papageorgiou  is supported by the Hellenic Foundation for Research and Innovation, Project HFRI-FM17-1733.
	
	\section*{Conflicts of interest/Competing interests}
	Financial interests: The authors declare they have no financial interests. Non-financial interests: none.

	%%%%%%%%%%% Affiliation
	\vspace{20pt}
	\address{
		\noindent\textsc{Giannis Polychrou:}
		\href{mailto:ipolychr@math.auth.gr}
		{ipolychr@math.auth.gr}\\
		Department of Mathematics, Aristotle University of Thessaloniki,
		Thessaloniki 54124, Greece}
	
	\vspace{10pt}
	\address{
		\noindent\textsc{Effie Papageorgiou:}
		\href{mailto:papageoeffie@gmail.com}
		{papageoeffie@gmail.com}\\
		Department of Mathematics and Applied Mathematics,
		University of Crete,
		Crete 70013, Greece}
	
	\vspace{10pt}
	\address{
		\noindent\textsc{Anestis Fotiadis}
		\href{mailto:fotiadisanestis@math.auth.gr}
		{fotiadisanestis@math.auth.gr}\\
		Department of Mathematics, Aristotle University of Thessaloniki,
		Thessaloniki 54124, Greece}
	
	\vspace{10pt}
	\address{
		\noindent\textsc{Costas Daskaloyannis}
		\href{mailto:daskalo@math.auth.gr}
		{daskalo@math.auth.gr}\\
		Department of Mathematics, Aristotle University of Thessaloniki,
		Thessaloniki 54124, Greece}
\end{document}